\begin{document}
\title{Some recent work in  Fr\'{e}chet geometry\footnote{Invited paper,
  International Conference on Differential Geometry and Dynamical Systems, Bucharest 6-9 October 2011}}
\author{ C.T.J. Dodson}
\pagestyle{myheadings}
\markboth{Some recent work in  Fr\'{e}chet geometry}{C.T.J. Dodson}
\date{}
\maketitle

\newtheorem{definition}{Definition}[section]
\theoremstyle{theorem}
\newtheorem{theorem}{Theorem}[section]
\newtheorem{proposition}[theorem]{Proposition}
\newtheorem{corollary}[theorem]{Corollary}
\newtheorem{lemma}[theorem]{Lemma}
\newtheorem{remark}[theorem]{Remark}
\begin{abstract}
\noindent Some recent work in Fr\'{e}chet geometry is briefly reviewed. In particular an earlier result on the structure of second tangent bundles in the finite dimensional case was extended to infinite dimensional Banach manifolds and Fr\'{e}chet manifolds that could be represented as projective limits of Banach manifolds. This led to further results concerning the characterization of second tangent bundles and differential equations in the more general Fr\'{e}chet structure needed for applications. A summary is given of recent results on hypercyclicity of operators on Fr\'{e}chet spaces.\\
{\bf MSC:} 58B25 58A05 47A16, 47B37\\
{\bf Keywords:}  Banach manifold; Fr\'{e}chet manifold; projective limit; connection; second tangent bundle, frame bundle, differential equations, hypercyclicity.
\end{abstract}
\section{Introduction}
Dodson and Radivoiovici~\cite{Dod-Rad,DodT} proved that in the case of a finite $n$%
-dimensional manifold $M$, a vector bundle structure on $T^{2}M$ can be well
defined if and only if $M$ is endowed with a linear connection:
$T^{2}M$ becomes then and only then a vector bundle over $M$ with
structure group the general linear group $GL(2n;\mathbb{R}).$  The manifolds $M$ that admit
linear connections are precisely the paracompact ones. Manifolds with
connections form a full subcategory $Man\nabla $ of the category $Man$ of
smooth manifolds and smooth maps; the constructions in
the above theorems~\cite{Dod-Rad} provide a functor
$Man\nabla \longrightarrow VBun$~\cite{DodT}.
A linear connection is a splitting of $TLM,$ which then induces splitting in
the second jet bundle $J^{2}M$ (called a \textit{dissection}
by Ambrose et al.~\cite{Ambrose}) and we get also a corresponding splitting in
$T^{2}L^{2}M.$

Dodson and Galanis~\cite{Dod-Gal1} extended the results to manifolds $M$ modeled on
an arbitrarily chosen Banach space $\mathbb{E}$. Using the Vilms~\cite{VI}
point of view for connections on infinite dimensional vector bundles and a
new formalism, it was proved that $T^{2}M$ can be thought of as a Banach vector bundle over $M$
with structure group $GL(\mathbb{E}\times \mathbb{E})$ if and only if $M$
admits a linear connection. The case of non-Banach Fr\'{e}chet
modeled manifolds was investigated~\cite{Dod-Gal1} but there are intrinsic difficulties with Fr\'{e}chet spaces. These include pathological general linear groups, which do not even admit reasonable
topological group structures. However, every Fr\'{e}chet space admits representation as
a projective limit of Banach spaces and under certain conditions this can persist into
manifold structures. By restriction to those Fr\'{e}chet
manifolds which can be obtained as projective limits of Banach manifolds~\cite{Gal1},
it is possible to endow $T^{2}M$ with a vector
bundle structure over $M$ with structure group a new topological group, that in a
generalized sense is of Lie type. This construction is equivalent to the existence
on $M$ of a specific type of linear connection characterized by a
generalized set of Christoffel symbols. We outline the methodology and a range of results in
subsequent sections but first we mention what makes the Fr\'{e}chet case important but difficult.

In a number of cases that have significance in global analysis and physical field theory, Banach space representations break down and we need Fr\'{e}chet spaces, which have weaker requirements for their topology,  see for example Clarke~\cite{Clarke} for the metric geometry of the Fr\'{e}chet manifold of all $C^\infty$ Riemannian metrics on a fixed closed finite-dimensional orientable manifold. For background to the theory see Hamilton~\cite{Hamilton} and Neeb~\cite{Neeb}, Steen and Seebach~\cite{Steen}. However, there is a price to pay for these weaker structural constraints: Fr\'{e}chet spaces lack a general solvability theory of differential equations, even linear ones; also, the space of continuous linear mappings drops out of the category while the space of linear isomorphisms does not admit a reasonable Lie group structure. We shall see that these shortcomings can be worked round to a certain extent. The developments described in this short review will be elaborated in detail in the forthcoming monograph by Dodson, Galanis and Vassilliou~\cite{DGV3}.

\subsection{Fr\'{e}chet spaces}
A {\em seminorm} \label{seminorm} on (eg for definiteness a real) vector space $X$ is a map $p: X \rightarrow \mathbb{R}$ such that
\begin{gather*}
p(x)\geq 0, \tag{i}\\
p(x+y)\leq p(x)+p(y),\tag{ii}\\
p(\lambda x)=\left\vert \lambda \right\vert p(x), \tag{iii}
\end{gather*}
for every $x,y\in X$ and $\lambda\in \mathbb{R}$.

A family of seminorms $\Gamma =\{p_{\alpha }\}_{\alpha \in I}$ on $X$ \label{smfamily} defines a unique topology $\mathcal{T}_{\Gamma }$\label{topolG} compatible with the vector structure of $X$. The neighborhood base $\mathcal{B}_{\Gamma}$ of $\mathcal{T}_{\Gamma }$   is determined by defining  
\begin{eqnarray*}
  \nonumber 
  S(\Delta ,\varepsilon )&=&\left\{ x\in \mathbb{F} : p(x)<\varepsilon,\;
                                      \forall \;p\in \Delta \right\}\\
\mathcal{B}_{\Gamma}&=&\left\{ S(\Delta ,\varepsilon ):\varepsilon >0\text{ and }\Delta
\text{ a finite subset of }\Gamma \right\} .                                      
\label{baseG}
\end{eqnarray*}

The topology $\mathcal{T}_{\Gamma }$ induced on $X$ by $p$ is the largest making all the seminorms continuous but it is not necessarily Hausdorff. In fact $(X,\mathcal{T}_{\Gamma })$ is a locally convex topological vector space and the local convexity of a topology on $X$ is its subordination to a family of seminorms.
Hausdorffness requires the further property
\[
x=0\Leftrightarrow p(x)=0,\;\,\forall p\in \Gamma.
\]
Then it is metrizable if and only if the family of seminorms is countable.

Convergence of a sequence $( x_{n})_{n\in \mathbb{N}}$ in $X$ is dependent on all the seminorms of $\Gamma$
 \[
  x_n\rightarrow x  \Leftrightarrow \ p(x_n-x)\rightarrow 0, \ \forall p \in \Gamma.
\]
Completeness is if and only if we have convergence in $X$ of every sequence $(x_{n})_{n\in \mathbb{N}}$ in $X$ with
\[
   \lim_{n.m\rightarrow \infty} p(x_{n}-x_{m}) = 0; \;\; \forall\; p \in \Gamma .
\]

\begin{definition}
A {\em Fr\'echet space} is a topological vector space $\mathbb{F}$\label{Frspace} that is locally convex, Hausdorff, metrizable and complete.
\end{definition}
So, every Banach space is a Fr\'echet space, with just one seminorm and that one is a norm.
More interesting examples include the following:
\begin{itemize}
\item The space $\mathbb{R}^{\infty }=\underset{n\in \mathbb{N}}{\prod
}\mathbb{R}^{n}$, endowed with the cartesian topology, is a Fr\'echet
space with corresponding family of seminorms
\[
\left\{p_{n}(x_{1},x_{2},...)=\left\vert x_{1}\right\vert +\left\vert
x_{2}\right\vert +...+\left\vert x_{n}\right\vert \right\} _{n\in \mathbb{N}}.
\]
Metrizability can be established by putting
\begin{equation}\label{distRinfty}
    d(x,y)=\sum_i \frac{|x_i-y_i|}{2^i(1+|x_i-y_i|)}.
\end{equation}
In $\mathbb{R}^\infty$ the completeness is inherited from that of each copy of the real line. For if $x=(x_i)$ is Cauchy in $\mathbb{R}^\infty$ then for each $i,$ $(x_i^m), m\in \mathbb{N}$ is Cauchy in $\mathbb{R}$ and hence converges, to $X_i$ say, and $(X_i)=X\in \mathbb{R}^\infty$ with $d(x_i,X_i)\rightarrow 0$ as $i\rightarrow \infty.$ Separability arises from the countable dense subset of elements having finitely many rational components and the remainder zero; second countability comes from metrizability.
Hausdorfness implies that a compact subset of a Fr\'{e}chet space is closed; a closed subspace is a Fr\'{e}chet space and a quotient by a closed subspace is a Fr\'{e}chet space.
In fact, $\mathbb{R}^{\infty}$ is a special case from a classification for Fr\'{e}chet spaces~\cite{Neeb}. For each seminorm $p_n=|| \ ||_n$ we can define the normed subspace $F_n=F/p_n^{-1}(0)$ by factoring out the null space of $p_n.$ Then, the seminorm requirement (\ref{seminorm}) provides a linear injection into the product of normed spaces
\begin{equation}\label{injinprod}
    p: F\rightarrow \prod_{n\in \mathbb{N}} F_n :f\mapsto (p_n(f))_{n\in \mathbb{N}}
\end{equation}
and the completeness of $F$ is equivalent to the closedness of $p(V)$ in the Banach product of the closures $\overline{F_n}$ and $p$ extends to an embedding of $F$ in this product. This embedding can be used to construct limiting processes for geometric structures of interest in Fr\'{e}chet manifolds modelled on $F.$
\item More generally, any \emph{countable} cartesian product of Banach spaces
$\mathbb{F}=\prod_{n\in \mathbb{N}}\mathbb{E}^{n}$ is a Fr\'echet space with topology defined by the seminorms  $(q_n)_{n\in\mathbb{N}}$, given by
\[
 q_n(x_{1},x_{2},...)=\sum_{i=1}^n\left\Vert
x_i\right\Vert _i,
\]
where $\left\Vert\, \cdot\, \right\Vert _i$ denotes the norm of the $i$-factor $\mathbb{E}^i$.
\item The space of continuous functions $C^{0}(\mathbb{R},\mathbb{R})$ is a Fr\'echet space with seminorms $(p_n)_{n\in\mathbb{N}}$ defined by
\[
   p_{n}(f)=\sup \big\{ \left\vert f(x)\right\vert ,\;x\in
                                                  \lbrack -n,n]\big\}.
\]
\item The space of smooth functions $C^{\infty}(I,\mathbb{R})$, where
$I$ is a compact interval of $\mathbb{R}$, is a Fr\'echet space with
seminorms defined by
\[
 p_{n}(f)=\overset{n}{\underset{i=0}{\sum }}\sup \big\{ \left\vert
D^{i}f(x)\right\vert ,\;x\in I\big\}.
\]
\item The space $C^{\infty}(M,V),$  of smooth sections of the vector bundle $V$ over compact smooth Riemannian manifold $M$ with covariant derivative $\nabla,$ is a
Fr\'{e}chet space with
\begin{equation}\label{CMV}
    ||f||_n = \sum_{i=0}^n sup_x|\nabla^if(x)|, \ \ {\rm for} \ n\in \mathbb{N}.
\end{equation}
\item Fr\'{e}chet spaces of sections arise
naturally as configurations of a physical field. Then the moduli space, consisting of inequivalent
configurations of the physical field, is the quotient of the
infinite-dimensional configuration space $\mathcal{X}$ by the appropriate
symmetry gauge group. Typically, $\mathcal{X}$ is modelled on a Fr\'{e}chet
space of smooth sections of a vector bundle over a closed manifold.
For example, see Omori~\cite{Omori1,Omori4}.
 \end{itemize}

\section{Banach second tangent bundle}
Let $M$ be a $C^{\infty}-$manifold modeled on a Banach space $\mathbb{E}$
and $\{(U_{\alpha },\psi _{\alpha })\}_{\alpha \in I}$ a corresponding
atlas. The latter gives rise to an atlas $\{(\pi _{M}^{-1}(U_{\alpha }),\Psi
_{\alpha })\}_{\alpha \in I}$ of the tangent bundle $TM$ of $M$ with%
\begin{equation*}
\Psi _{\alpha }:\pi _{M}^{-1}(U_{\alpha })\longrightarrow \psi _{\alpha
}(U_{\alpha })\times \mathbb{E}:[c,x]\longmapsto (\psi _{\alpha }(x),(\psi
_{\alpha }\circ c)^{\prime }(0)),
\end{equation*}%
where $[c,x]$ stands for the equivalence class of a smooth curve $c$ of $M$
with $c(0)=x$ and
$$(\psi _{\alpha }\circ c)^{\prime }(0)=[d(\psi _{\alpha
}\circ c)(0)](1).$$
The corresponding trivializing system of $T(TM)$ is
denoted by
$$\{(\pi _{TM}^{-1}(\pi _{M}^{-1}(U_{\alpha })),\widetilde{\Psi }%
_{\alpha })\}_{\alpha \in I}.$$
Adopting the formalism of Vilms~\cite{VI}, a connection on $M$ is a vector
bundle morphism:
\begin{equation*}
\nabla:T(TM)\longrightarrow TM
\end{equation*}
with the additional property that the mappings $\omega _{\alpha }:\psi
_{\alpha }(U_{\alpha })\times \mathbb{E}\rightarrow \mathcal{L}(\mathbb{E},\mathbb{E)}$
defined by the local forms of $\nabla:$
\begin{equation*}
\nabla_{\alpha }:\psi _{\alpha }(U_{\alpha })\times \mathbb{E}\times \mathbb{E}%
\times \mathbb{E}\rightarrow \psi _{\alpha }(U_{\alpha })\times \mathbb{E}
\end{equation*}%
with $\nabla_{\alpha }:=\Psi _{\alpha }\circ \nabla\circ (\widetilde{\Psi }_{\alpha
})^{-1},$ $\alpha \in I,$ via the relation
\begin{equation*}
\nabla_{\alpha }(y,u,v,w)=(y,w+\omega _{\alpha }(y,u)\cdot v),
\end{equation*}%
are smooth. Furthermore, $\nabla$ is a linear connection on $M$ if and only if $%
\{\omega _{\alpha }\}_{\alpha \in I}$ are linear with respect to the second
variable.

Such a connection $\nabla$ is fully characterized by the family of Christoffel
symbols $\{\Gamma _{\alpha }\}_{\alpha \in I}$ , which are smooth mappings%
\begin{equation*}
\Gamma _{\alpha }:\psi _{\alpha }(U_{\alpha })\longrightarrow \mathcal{L}(%
\mathbb{E},\mathcal{L}(\mathbb{E},\mathbb{E}))
\end{equation*}%
defined by $\Gamma _{\alpha }(y)[u]=\omega _{\alpha }(y,u)$, $(y,u)\in \psi
_{\alpha }(U_{\alpha })\times \mathbb{E}$.

The requirement that a connection is well defined on the common areas of
charts of $M$, yields the Christoffel symbols satisfying the following
compatibility condition:
\begin{equation}
\begin{array}{c}
\Gamma _{\alpha }(\sigma _{\alpha \beta }(y))(d\sigma _{\alpha \beta
}(y)(u))[d(\sigma _{\alpha \beta }(y))(v)]+(d^{2}\sigma _{\alpha \beta
}(y)(v))(u)= \\
=d\sigma _{\alpha \beta }(y)((\Gamma _{\beta }(y)(u))(v)),
\end{array}
\end{equation}
for all $(y,u,v)\in \psi _{\alpha }(U_{\alpha }\cap U_{\beta })\times
\mathbb{E}\times \mathbb{E}$, and $d$, $d^{2}$ stand for the first and the
second differential respectively. Here by $\sigma _{\alpha \beta }$ we
denote the diffeomorphisms $\psi _{\alpha }\circ \psi _{\beta }^{-1}$ of $\mathbb{E}$.
For further details and the relevant proofs see~\cite{VI}.

Let $M$ be a smooth manifold modeled on the Banach
space $\mathbb{E}$ and $\{(U_{\alpha },\psi _{\alpha })\}_{\alpha \in I}$ a
corresponding atlas. For each $x\in M$ we define the following equivalence
relation on $C_{x}=\{f:(-\varepsilon ,\varepsilon )\rightarrow M$ $|$ $f$
smooth and $f(0)=x$, $\varepsilon >0\}$:%
\begin{equation}
f\approx _{x}g\Leftrightarrow f^{^{\prime }}(0)=g^{\prime }(0)\text{ and }
f^{\prime \prime }(0)=g^{\prime \prime }(0),
\end{equation}%
where by $f^{^{\prime }}$ and $f^{^{\prime \prime }}$ we denote the first
and the second, respectively, derivatives of $f$:%
\begin{eqnarray*}
f^{\prime } &:&(-\varepsilon ,\varepsilon )\rightarrow TM:t\longmapsto
\lbrack df(t)](1) \\
f^{\prime \prime } &:&(-\varepsilon ,\varepsilon )\rightarrow
T(TM):t\longmapsto \lbrack df^{\prime }(t)](1).
\end{eqnarray*}

The \textit{tangent space of order two} of $M$ at the point $x$
is the quotient $T_{x}^{2}M=C_{x}/\approx _{x}$ and the \textit{tangent
bundle of order two} of $M$ is the union of all tangent spaces of order 2:
$T^{2}M:=\underset{x\in M}{\cup }T_{x}^{2}M$.
Of course, $T_{x}^{2}M$ can be thought of as a
topological vector space isomorphic to $\mathbb{E}\times \mathbb{E}$ via the
bijection
\begin{equation*}
T_{x}^{2}M\overset{\simeq }{\longleftrightarrow }\mathbb{E}\times \mathbb{E}%
:[f,x]_{2}\longmapsto ((\psi _{\alpha }\circ f)^{\prime }(0),(\psi _{\alpha
}\circ f)^{\prime \prime }(0)),
\end{equation*}%
where $[f,x]_{2}$ is the equivalence class of $f$ with respect to
$\approx _{x}$. However, this structure depends on the choice of the chart
$(U_{\alpha },\psi _{\alpha })$, hence a definition of a vector bundle
structure on $T^{2}M$ cannot be achieved by the use of the aforementioned
bijections. The most convenient way to overcome this obstacle is to assume
that the manifold $M$ is endowed with the additional structure of a linear
connection.

\begin{theorem}
\label{T2Mvb} For every linear connection $\nabla$ on the
manifold $M$, $T^{2}M$ becomes a Banach vector bundle with structure
group the general linear group $GL(\mathbb{E}\times \mathbb{E)}$.
\end{theorem}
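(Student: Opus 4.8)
The plan is to use the Christoffel symbols of $\nabla$ to repair the chart-dependent bijections $T^2_xM\cong\mathbb{E}\times\mathbb{E}$ into genuine vector bundle trivializations whose transition functions are linear. Writing $\pi^2:T^2M\to M$ for the natural projection, I would fix for each chart $(U_\alpha,\psi_\alpha)$ the map
$$\Phi_\alpha:(\pi^2)^{-1}(U_\alpha)\longrightarrow\psi_\alpha(U_\alpha)\times\mathbb{E}\times\mathbb{E},$$
$$[f,x]_2\longmapsto\bigl(\psi_\alpha(x),\,(\psi_\alpha\circ f)'(0),\,(\psi_\alpha\circ f)''(0)+\Gamma_\alpha(\psi_\alpha(x))\bigl((\psi_\alpha\circ f)'(0)\bigr)\bigl((\psi_\alpha\circ f)'(0)\bigr)\bigr).$$
Each $\Phi_\alpha$ is a well-defined bijection onto its image — independence of the representative $f$ is exactly the definition of $\approx_x$, and the added Christoffel term is an invertible affine shift of the second slot depending only on the first slot and the base point, so invertibility is unaffected — and it covers the chart domain $U_\alpha$.

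The decisive computation is the transition function $\Phi_\alpha\circ\Phi_\beta^{-1}$ over $U_\alpha\cap U_\beta$. Writing $g=\psi_\beta\circ f$ with $g(0)=y$, $g'(0)=v$, $g''(0)=w$, and $\sigma_{\alpha\beta}=\psi_\alpha\circ\psi_\beta^{-1}$, the chain rule gives
$$(\psi_\alpha\circ f)'(0)=d\sigma_{\alpha\beta}(y)(v),\qquad(\psi_\alpha\circ f)''(0)=d^2\sigma_{\alpha\beta}(y)(v,v)+d\sigma_{\alpha\beta}(y)(w).$$
The troublesome term is $d^2\sigma_{\alpha\beta}(y)(v,v)$, which is quadratic — not linear — in $v$; this is precisely the obstruction that prevents the naive bijections from assembling into a vector bundle. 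Here the connection enters: substituting $u=v$ into the Christoffel compatibility condition stated above, and using the symmetry of the second differential, shows that this quadratic term equals the difference between the transported $\beta$-correction $d\sigma_{\alpha\beta}(y)(\Gamma_\beta(y)(v)(v))$ and the $\alpha$-correction $\Gamma_\alpha(\sigma_{\alpha\beta}(y))(d\sigma_{\alpha\beta}(y)(v))(d\sigma_{\alpha\beta}(y)(v))$, so that the corrected second slot transforms as $d\sigma_{\alpha\beta}(y)\bigl(w+\Gamma_\beta(y)(v)(v)\bigr)$. Consequently
$$\Phi_\alpha\circ\Phi_\beta^{-1}(y,v,w)=\bigl(\sigma_{\alpha\beta}(y),\,d\sigma_{\alpha\beta}(y)(v),\,d\sigma_{\alpha\beta}(y)(w)\bigr),$$
so on each fibre the transition is the continuous linear isomorphism $d\sigma_{\alpha\beta}(y)\times d\sigma_{\alpha\beta}(y)$ of $\mathbb{E}\times\mathbb{E}$.

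It then remains to verify the hypotheses of the standard construction theorem for Banach vector bundles. The transition maps $y\mapsto d\sigma_{\alpha\beta}(y)\times d\sigma_{\alpha\beta}(y)$ are smooth with values in $GL(\mathbb{E}\times\mathbb{E})$, since $\sigma_{\alpha\beta}$ is a diffeomorphism between open subsets of $\mathbb{E}$ and differentiation and inversion are smooth on the relevant Banach operator spaces; the cocycle identity on triple overlaps is inherited from the chain rule for $d\sigma_{\alpha\beta}$. Giving $T^2M$ the unique topology for which every $\Phi_\alpha$ is a homeomorphism, one checks Hausdorffness and the smooth compatibility of the $\Phi_\alpha$ in the usual way, and transports the vector space structure of $\mathbb{E}\times\mathbb{E}$ to each fibre $T^2_xM$ — well-defined precisely because the transitions are linear. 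The construction theorem then endows $T^2M$ with the structure of a Banach vector bundle over $M$ modelled on $\mathbb{E}\times\mathbb{E}$, with structure group $GL(\mathbb{E}\times\mathbb{E})$ (the transition cocycle in fact reduces to the diagonal copy of $GL(\mathbb{E})$).

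The main obstacle throughout is the cancellation step: everything hinges on the compatibility condition for the Christoffel symbols being exactly what is needed to kill the second-derivative term, and indeed the converse — that such a cancellation forces $M$ to carry a linear connection — is what underlies the ``if and only if'' in the Banach result of Dodson and Galanis~\cite{Dod-Gal1}.
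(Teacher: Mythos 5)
Your proposal is correct and follows essentially the same route as the paper's proof: the same Christoffel-corrected trivializations $\Phi_\alpha$, the same key cancellation obtained by setting $u=v$ in the compatibility condition for the $\Gamma_\alpha$, and the same conclusion that the transition functions are the smooth $GL(\mathbb{E}\times\mathbb{E})$-valued maps $d\sigma_{\alpha\beta}\times d\sigma_{\alpha\beta}$, whence also $T^2M\cong TM\times TM$. The only cosmetic differences are that the paper proves surjectivity of $\Phi_\alpha$ by exhibiting an explicit polynomial curve hitting a given $(x,u,v)$, where you instead invoke the chart bijection $T^2_xM\cong\mathbb{E}\times\mathbb{E}$ composed with an invertible affine shift, and that the paper verifies the bundle axioms directly rather than citing a construction theorem.
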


\begin{proof}
Let $\pi _{2}:T^{2}M\rightarrow M$ be the natural projection of $T^{2}M$ to $%
M$ with $\pi _{2}([f,x]_{2})=x$ and $\{\Gamma _{\alpha }:\psi _{\alpha
}(U_{\alpha })\longrightarrow \mathcal{L}(\mathbb{E},\mathcal{L}(\mathbb{E},%
\mathbb{E}))\}_{a\in I}$ the Christoffel symbols of the connection $D$ with
respect to the covering $\{(U_{a},\psi _{a})\}_{a\in I}$ of $M$. Then, for
each $\alpha \in I$, we define the mapping $\Phi _{\alpha }:\pi
_{2}^{-1}(U_{\alpha })\longrightarrow U_{\alpha }\times \mathbb{E}\times
\mathbb{E}$ with
\begin{equation*}
\Phi _{\alpha }([f,x]_{2})=(x,(\psi _{\alpha }\circ f)^{\prime }(0),(\psi
_{\alpha }\circ f)^{\prime \prime }(0)+\Gamma _{\alpha }(\psi _{\alpha
}(x))((\psi _{\alpha }\circ f)^{\prime }(0))[(\psi _{\alpha }\circ
f)^{\prime }(0)]).
\end{equation*}%
These are obviously well defined and injective mappings. They are also
surjective since every element $(x,u,v)\in U_{\alpha }\times \mathbb{E}\times
\mathbb{E}$ can be obtained through $\Phi _{\alpha }$ as the image of the
equivalence class of the smooth curve%
\begin{equation*}
f:\mathbb{R}\rightarrow \mathbb{E}:t\mapsto \psi _{\alpha }(x)+tu+\frac{t^{2}%
}{2}(v-\Gamma _{\alpha }(\psi _{\alpha }(x))(u)[u]),
\end{equation*}%
appropriately restricted in order to take values in $\psi _{\alpha
}(U_{\alpha })$. On the other hand, the projection of each $\Phi _{\alpha }$
to the first factor coincides with the natural projection $\pi _{2}:$ $%
pr_{1}\circ \Phi _{\alpha }=\pi _{2}$. Therefore, the trivializations $%
\{(U_{\alpha },\Phi _{\alpha })\}_{a\in I}$ define a fibre bundle
structure on $T^{2}M$ and we need now to focus on the behavior of
the mappings $\Phi _{\alpha }$ on areas of $M$ that are covered by
common domains of different charts. Indeed, if $(U_{\alpha },\psi
_{\alpha }),$ $(U_{\beta },\psi _{\beta })$ are two such charts,
let $(\pi _{2}^{-1}(U_{\alpha }),\Phi _{\alpha })$, $(\pi
_{2}^{-1}(U_{\beta }),\Phi _{\beta })$ be the corresponding
trivializations of $T^{2}M$. Taking into account the compatibility
condition (1) satisfied by the Christoffel symbols $\{\Gamma
_{\alpha }\}$ we see that:%
\begin{equation*}
(\Phi _{\alpha }\circ \Phi _{\beta }^{-1})(x,u,v)=\Phi _{\alpha }([f,x]_{2}),
\end{equation*}%
where $(\psi _{\beta }\circ f)^{\prime }(0)=u$ and $(\psi _{\beta }\circ
f)^{\prime \prime }(0)+\Gamma _{\beta }(\psi _{\beta }(x))(u)[u]=v$. As a
result,%
\begin{equation*}
(\Phi _{\alpha }\circ \Phi _{\beta }^{-1})(x,u,v)=
\end{equation*}%
\begin{equation*}
((\psi _{\alpha }\circ \psi _{\beta }^{-1})(\psi _{\beta }(x)),d(\psi
_{\alpha }\circ \psi _{\beta }^{-1}\circ \psi _{\beta }\circ
f)(0)(1),d^{2}(\psi _{\alpha }\circ \psi _{\beta }^{-1}\circ \psi _{\beta
}\circ f)(0)(1,1))+
\end{equation*}%
\begin{equation*}
\Gamma _{\alpha }((\psi _{\alpha }\circ \psi _{\beta }^{-1})(\psi _{\beta
}(x)))(d(\psi _{\alpha }\circ \psi _{\beta }^{-1}\circ \psi _{\beta }\circ
f)(0)(1))[d(\psi _{\alpha }\circ \psi _{\beta }^{-1}\circ \psi _{\beta
}\circ f)(0)(1)]=
\end{equation*}%
\begin{equation*}
(\sigma _{\alpha \beta }(\psi _{\beta }(x)),d\sigma _{\alpha \beta }(\psi
_{\beta }(x))(u),d\sigma _{\alpha \beta }(\psi _{\beta }(x))(d^{2}(\psi
_{\beta }\circ f)(0)(1,1))
\end{equation*}%
\begin{equation*}
+d^{2}\sigma _{\alpha \beta }(\psi _{\beta }(x))(u)[u]+\Gamma _{\alpha
}(\sigma _{\alpha \beta }(\psi _{\beta }(x)))(d\sigma _{\alpha \beta }(\psi
_{\beta }(x))(u))[d\sigma _{\alpha \beta }(\psi _{\beta }(x))(u)])=
\end{equation*}%
\begin{equation*}
(\sigma _{\alpha \beta }(\psi _{\beta }(x)),d\sigma _{\alpha \beta }(\psi
_{\beta }(x))(u),d\sigma _{\alpha \beta }(\psi _{\beta }(x))(d^{2}(\psi
_{\beta }\circ f)(0)(1,1)+\Gamma _{\beta }(\psi _{\beta }(x))(u)[u])=
\end{equation*}%
\begin{equation*}
=(\sigma _{\alpha \beta }(\psi _{\beta }(x)),d\sigma _{\alpha \beta }(\psi
_{\beta }(x))(u),d\sigma _{\alpha \beta }(\psi _{\beta }(x))(v)),
\end{equation*}%
where by $\sigma _{\alpha \beta }$ we denote again the diffeomorphisms $\psi
_{\alpha }\circ \psi _{\beta }^{-1}$. Therefore, the restrictions to the
fibres
\begin{equation*}
\Phi _{\alpha ,x}\circ \Phi _{\beta ,x}^{-1}:\mathbb{E}\times \mathbb{%
E\rightarrow E}\times \mathbb{E:}(u,v)\longmapsto (\Phi _{\alpha }\circ \Phi
_{\beta }^{-1})|_{\pi _{2}^{-1}(x)}(u,v)
\end{equation*}%
are linear isomorphisms and the mappings:%
\begin{equation*}
T_{\alpha \beta }:U_{\alpha }\cap U_{\beta }\rightarrow \mathcal{L}(\mathbb{E%
}\times \mathbb{E},\mathbb{E}\times \mathbb{E}):x\longmapsto \Phi _{\alpha
,x}\circ \Phi _{\beta ,x}^{-1}
\end{equation*}%
are smooth since $T_{\alpha \beta }=(d\sigma _{\alpha \beta }\circ \psi
_{\beta })\times (d\sigma _{\alpha \beta }\circ \psi _{\beta })$ holds for
each $\alpha ,\beta \in I$.

As a result, $T^{2}M$ is a vector bundle over $M$ with fibres of type $%
\mathbb{E}\times \mathbb{E}$ and structure group $GL(\mathbb{E}\times
\mathbb{E})$. Moreover, $T^{2}M$ is isomorphic to $TM\times TM$ since both
bundles are characterized by the same cocycle $\{(d\sigma _{\alpha \beta
}\circ \psi _{\beta })\times (d\sigma _{\alpha \beta }\circ \psi _{\beta
})\}_{\alpha ,\beta \in I}$ of transition functions. \bigskip
\end{proof}
The converse of the theorem was proved also in~\cite{Dod-Gal1}.
These results coincide in the finite dimensional case with the earlier result
since the corresponding transition functions are
identical (see~\cite{Dod-Rad} Corollary 2).

The finite dimensional results~\cite{Dod-Rad,DodT} on
the \textit{frame bundle} \textit{of order two}
\begin{equation*}
L^{2}(M):=\underset{x\in M}{\cup }\mathcal{L}is(\mathbb{E}\times \mathbb{E}%
,T_{x}^{2}M),
\end{equation*}
were extended also to the Banach manifold $M$ by Dodson and Galanis~\cite{Dod-Gal2}:
\begin{theorem}
Every linear connection $\nabla$ of the second order tangent bundle $T^{2}M$
corresponds bijectively to a connection $\omega $ of $L^{2}(M)$.
\end{theorem}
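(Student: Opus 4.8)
The statement is the classical equivalence, for a vector bundle carrying a Banach Lie structure group, between linear connections on the bundle and principal connections on its frame bundle, now applied to the pair $T^2M$ and its second-order frame bundle $L^2(M)$. Since Theorem~\ref{T2Mvb} equips $T^2M$ with the structure of a Banach vector bundle with fibre type $\mathbb{E}\times\mathbb{E}$ and structure group $GL(\mathbb{E}\times\mathbb{E})$, its frame bundle $L^2(M)=\underset{x\in M}{\cup}\mathcal{L}is(\mathbb{E}\times\mathbb{E},T_x^2M)$ is a principal bundle over $M$ with the same Banach Lie group $G:=GL(\mathbb{E}\times\mathbb{E})$, whose Lie algebra is $\mathcal{L}(\mathbb{E}\times\mathbb{E},\mathbb{E}\times\mathbb{E})$, abbreviated $\mathfrak{g}$. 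Moreover $L^2(M)$ and $T^2M$ share the transition cocycle $\{T_{\alpha\beta}\}$ exhibited in Theorem~\ref{T2Mvb}. The plan is to set up mutually inverse passages between the local connection forms of a linear connection $\nabla$ on $T^2M$ and the local forms of a principal connection $\omega$ on $L^2(M)$, and to show that the two families obey equivalent gluing laws.

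First I would present a linear connection $\nabla$ on the vector bundle $T^2M$ through its local data. Using the trivialisations $\Phi_\alpha$ of Theorem~\ref{T2Mvb}, $\nabla$ is determined by a smooth family of second-order Christoffel maps $B_\alpha:\psi_\alpha(U_\alpha)\to\mathcal{L}(\mathbb{E},\mathfrak{g})$, each a $\mathfrak{g}$-valued one-form on the chart; the requirement that $\nabla$ be globally well defined is encoded in an inhomogeneous compatibility condition on $U_\alpha\cap U_\beta$ relating $B_\alpha$, $B_\beta$, the cocycle $T_{\alpha\beta}$ and its differential $dT_{\alpha\beta}$. Dually, a principal connection on $L^2(M)$ can be described by the local forms $\omega_\alpha=s_\alpha^{*}\omega$ obtained by pulling back a $\mathfrak{g}$-valued connection one-form $\omega$ along the chart-induced frame sections $s_\alpha:U_\alpha\to L^2(M)$ (which are precisely the inverses $\Phi_{\alpha,x}^{-1}$); such a family descends to a single $\omega$ exactly when the $\omega_\alpha$ satisfy the gauge law $\omega_\alpha=\mathrm{Ad}(T_{\alpha\beta})\,\omega_\beta+T_{\alpha\beta}^{*}\theta$, where $\theta$ is the Maurer--Cartan form of $G$.

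The core of the argument is to match these two descriptions. I would define $\omega_\alpha$ from $B_\alpha$ along the chart frame and verify, by a direct computation entirely parallel to the cocycle calculation in the proof of Theorem~\ref{T2Mvb}, that the Christoffel compatibility condition for $\{B_\alpha\}$ is equivalent to the principal gauge law for $\{\omega_\alpha\}$. Performing this in both directions yields assignments $\nabla\mapsto\omega$ and $\omega\mapsto\nabla$ that are manifestly inverse on local data; since the local forms determine the respective global objects uniquely, and since the same identity shows the two connection-axiom systems correspond, the map is a well-defined bijection at the level of genuine connections, not merely of local representatives.

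The main obstacle I expect is the precise reconciliation of the two inhomogeneous terms: the additive $dT_{\alpha\beta}$-term in the transformation rule for the vector-bundle Christoffel symbols must be shown to coincide with the pulled-back Maurer--Cartan term $T_{\alpha\beta}^{*}\theta$, for the explicit second-order cocycle $T_{\alpha\beta}=(d\sigma_{\alpha\beta}\circ\psi_\beta)\times(d\sigma_{\alpha\beta}\circ\psi_\beta)$. A secondary point is the smoothness and well-posedness of all the operator-valued maps in the Banach category; here the decisive enabling fact is that, in contrast to the Fr\'{e}chet situation flagged in the Introduction, $GL(\mathbb{E}\times\mathbb{E})$ is a genuine Banach Lie group with Lie algebra $\mathcal{L}(\mathbb{E}\times\mathbb{E},\mathbb{E}\times\mathbb{E})$, so the adjoint action, the Maurer--Cartan form and the differentials $dT_{\alpha\beta}$ are all available and smooth, and the classical equivalence carries through verbatim.
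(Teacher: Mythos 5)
The paper itself gives no proof of this statement---it is quoted from \cite{Dod-Gal2}---but its internal evidence of method is the Fr\'{e}chet analogue in Section 4, where a connection form $\omega$ on the second frame bundle is converted into Christoffel symbols for $T^{2}M$ by pulling back along the chart-induced sections, which is precisely the correspondence you construct. Your proposal is the classical linear-connection/principal-connection equivalence run through the shared cocycle $T_{\alpha\beta}$ of Theorem~\ref{T2Mvb}, with the inhomogeneous term in the Christoffel transformation law identified with the pulled-back Maurer--Cartan term; this is essentially the same route the paper and its cited source rely on, and it is sound in this setting because $GL(\mathbb{E}\times\mathbb{E})$ is a genuine Banach Lie group, exactly the point that fails in the Fr\'{e}chet case.
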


\section{Fr\'{e}chet second tangent bundle}
Let $\mathbb{F}_{1}$ and $\mathbb{F}_{2}$ be two \emph{Hausdorff locally convex topological vector spaces}, and let $U$ be an open subset of $\mathbb{F}_{1}$.
A continuous map $f: U\rightarrow \mathbb{F}_{2}$ is called
{\em differentiable at $x\in U$} if  there exists a continuous linear map $Df(x):\mathbb{F}_{1}\rightarrow \mathbb{F}_{2}$ \label{Df(x)} such that
\[
R(t,v):=
\begin{cases}
  \begin{matrix}
     \frac{1}{t}\left(f(x+tv)-f(x)-Df(x)(tv)\right)\,\raisebox{.4ex}{,} & t\neq 0 \\
         0,                            & t=0
  \end{matrix}
\end{cases}
\]
is continuous at every $(0,v)\in \mathbb{R\times F}_{1}$. The map $f$ will be said to be {\em differentiable} if it is differentiable at every $x\in U$. We call $Df(x)$ the {\em differential} (or {\em derivative}) {\em of $f$ at $x$}.
As in classical (Fr\'echet) differentiation, $Df(x)$ is uniquely determined, see Leslie~\cite{LE1} and \cite{LE2} for more details.

A map $f: U\rightarrow \mathbb{F}_{2}$, as before, is called {\em $C^{1}$-differentiable} if it is differentiable at every point $x\in U$,
and the {\em (total) differential} or {\em (total) derivative} \label{totdif}
\[
  Df: U\times \mathbb{F}_{1}\rightarrow \mathbb{F}_{2}:
                                     (x,v)\mapsto Df(x)(v)
\]
is continuous.

This total differential $Df$ does not involve the space of continuous linear
maps $\mathcal{L}(\mathbb{F}_1,\mathbb{F}_2)$, thus avoiding the possibility of dropping out of the working category when $\mathbb{F}_{1}$ and $\mathbb{F}_{2}$ are Fr\'echet spaces.
The notion of $C^{n}$-differentiability ($n\geq 2$) can be defined by induction and $C^{\infty}$-differentiability follows.

Using the methodology of Galanis and Vassiliou~\cite{Gal2,VG 1} for tangent and frame bundles,
 a vector bundle structure was obtained on the second order
tangent bundles for those Fr\'{e}chet manifolds which can
be obtained as projective limits of Banach manifolds~\cite{Dod-Gal1}.
Let $M$ be a smooth manifold modeled on the Fr\'{e}chet space $\mathbb{F}.$
Taking into account that the latter \emph{always} can be
realized as a projective limit of Banach spaces $\{\mathbb{E}^{i};\rho
^{ji}\}_{i,j\in \mathbb{N}}$ (i.e. $\mathbb{F\cong }\varprojlim \mathbb{E}%
^{i}$) we assume that the manifold itself is obtained as the limit of a
projective system of Banach modeled manifolds $\{M^{i};\varphi
^{ji}\}_{i,j\in \mathbb{N}}.$
Then, it was proved~\cite{Dod-Gal1} that
the second order tangent bundles $\{T^{2}M^{i}\}_{i\in \mathbb{N}}$ form
also a projective system with limit (set-theoretically) isomorphic to
$T^{2}M .$
We define a vector bundle structure on $T^{2}M $ by means of a certain type
of linear connection on $M.$ The
problems concerning the structure group of this bundle
are overcome by the replacement of the pathological $GL(\mathbb{F}\times
\mathbb{F})$ by the new topological (and in a generalized sense smooth Lie)
group:
\begin{equation*}
\mathcal{H}^{0}(\mathbb{F\times F}):=\{(l^{i})_{i\in \mathbb{N}}\in {%
\prod_{i=1}^{\infty }}GL(\mathbb{E}^{i}\mathbb{\times E}^{i}):\,\varprojlim
l^{i}\,\text{\ exists}\}.
\end{equation*}%
Precisely, $\mathcal{H}^{0}(\mathbb{F\times F})$ is a topological
group that is isomorphic to the projective limit of the Banach-Lie groups
\begin{equation*}
\mathcal{H}_{i}^{0}(\mathbb{F\times F}):=\{(l^{1},l^{2},...,l^{i})_{i\in
\mathbb{N}}\in {\prod_{k=1}^{i}}GL(\mathbb{E}^{k}\mathbb{\times E}%
^{k}):\,\rho^{jk}\circ l^{j}=l^{k}\circ \rho^{jk}\,\text{\ }(k\leq j\leq
i)\}.
\end{equation*}%
Also, it can be considered as a generalized Lie group via its
embedding in the topological vector space $\mathcal{L}(\mathbb{F\times F})$.
\begin{theorem}
If a Fr\'{e}chet manifold $M=\varprojlim M^{i}$ is endowed with a linear
connection $\nabla$ that can be realized also as a projective limit of
connections $\nabla=\varprojlim \nabla^{i}$, then $T^{2}M$ is a Fr\'{e}chet vector
bundle over $M$ with structure group $\mathcal{H}^{0}(\mathbb{F\times F}).$
\end{theorem}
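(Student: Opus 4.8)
The plan is to build the Fr\'echet vector bundle structure on $T^{2}M$ factor by factor and then pass to the projective limit, exploiting that each $T^{2}M^{i}$ already carries a Banach vector bundle structure by Theorem~\ref{T2Mvb}. First I would fix, for each $i\in\mathbb{N}$, the atlas $\{(U_{\alpha}^{i},\psi_{\alpha}^{i})\}$ of $M^{i}$ together with the Christoffel symbols $\{\Gamma_{\alpha}^{i}\}$ of $\nabla^{i}$, and apply Theorem~\ref{T2Mvb} to obtain the trivializations $\Phi_{\alpha}^{i}:\pi_{2}^{-1}(U_{\alpha}^{i})\rightarrow U_{\alpha}^{i}\times\mathbb{E}^{i}\times\mathbb{E}^{i}$ and the transition cocycle $T_{\alpha\beta}^{i}=(d\sigma_{\alpha\beta}^{i}\circ\psi_{\beta}^{i})\times(d\sigma_{\alpha\beta}^{i}\circ\psi_{\beta}^{i})$ taking values in $GL(\mathbb{E}^{i}\times\mathbb{E}^{i})$.

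The crucial intermediate step is to show that these data are compatible with the connecting morphisms of the projective system. The hypothesis $\nabla=\varprojlim\nabla^{i}$ is precisely what forces the Christoffel symbols to intertwine with the linking maps $\rho^{ji}$, so that $\rho^{ji}$ carries $\Gamma_{\alpha}^{j}$ to $\Gamma_{\alpha}^{i}$ on overlaps. Combined with the compatibility of the charts themselves, namely $\sigma_{\alpha\beta}=\varprojlim\sigma_{\alpha\beta}^{i}$ and hence $d\sigma_{\alpha\beta}=\varprojlim d\sigma_{\alpha\beta}^{i}$ (differentiation commuting with the limit in the Leslie sense for projective-limit maps), this yields the intertwining $\rho^{ji}\circ\Phi_{\alpha}^{j}=\Phi_{\alpha}^{i}$ after composing with the induced linking morphisms at the level of fibres. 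Consequently the family $\{\Phi_{\alpha}^{i}\}_{i}$ is a projective system of Banach trivializations, and $\Phi_{\alpha}:=\varprojlim\Phi_{\alpha}^{i}$ is a well-defined bijection of $\pi_{2}^{-1}(U_{\alpha})$ onto $U_{\alpha}\times\mathbb{F}\times\mathbb{F}$, where $U_{\alpha}=\varprojlim U_{\alpha}^{i}$ and $\mathbb{F}\times\mathbb{F}\cong\varprojlim(\mathbb{E}^{i}\times\mathbb{E}^{i})$.

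With the limit trivializations in hand I would verify that $\{(U_{\alpha},\Phi_{\alpha})\}$ is a Fr\'echet vector bundle atlas on $T^{2}M$, using the set-theoretic identification $T^{2}M\cong\varprojlim T^{2}M^{i}$ recorded above. The transition functions are $T_{\alpha\beta}=\varprojlim T_{\alpha\beta}^{i}=(d\sigma_{\alpha\beta}\circ\psi_{\beta})\times(d\sigma_{\alpha\beta}\circ\psi_{\beta})$; since each $T_{\alpha\beta}^{i}(x)\in GL(\mathbb{E}^{i}\times\mathbb{E}^{i})$ and the linking relations $\rho^{ji}\circ T_{\alpha\beta}^{j}=T_{\alpha\beta}^{i}\circ\rho^{ji}$ hold by the previous step, the limit exists pointwise and lands in $\mathcal{H}^{0}(\mathbb{F}\times\mathbb{F})$ by its very definition. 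Smoothness of $x\mapsto T_{\alpha\beta}(x)$ into $\mathcal{H}^{0}(\mathbb{F}\times\mathbb{F})$ in the generalized Lie sense then follows because each component is smooth and the limit respects the differentiable structure, so that the pathological $GL(\mathbb{F}\times\mathbb{F})$ is entirely bypassed.

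I expect the main obstacle to be this last point: showing that the projective limit of the transition cocycle not merely exists as a map into the topological group $\mathcal{H}^{0}(\mathbb{F}\times\mathbb{F})$ but is smooth in the weakened projective-limit sense of differentiability defined above, so that $T^{2}M$ is genuinely a Fr\'echet---rather than merely topological---vector bundle. This hinges on checking that the Leslie differentials of the maps $T_{\alpha\beta}^{i}$ assemble into a well-defined total differential of $T_{\alpha\beta}$, which is where one must be careful that differentiation commutes with $\varprojlim$ and that no continuity is lost in passing to the limit.
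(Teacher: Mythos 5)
Your proposal follows essentially the same route as the paper's own proof: apply Theorem~\ref{T2Mvb} on each Banach factor $M^{i}$, use the hypothesis $\nabla=\varprojlim\nabla^{i}$ to intertwine the trivializations $\Phi_{\alpha}^{i}$ with the connecting morphisms (the paper's relation $(\varphi^{ji}\times\rho^{ji}\times\rho^{ji})\circ\Phi_{\alpha}^{j}=\Phi_{\alpha}^{i}\circ g^{ji}$, which you state in slightly compressed form), pass to $\Phi_{\alpha}=\varprojlim\Phi_{\alpha}^{i}$, and observe that the transition functions land in $\mathcal{H}^{0}(\mathbb{F}\times\mathbb{F})$ by its very definition, bypassing $GL(\mathbb{F}\times\mathbb{F})$. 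The smoothness concern you flag at the end is handled in the paper by factoring $T_{\alpha\beta}=\epsilon\circ T_{\alpha\beta}^{\ast}$ through the embedding $\epsilon:\mathcal{H}^{0}(\mathbb{F}\times\mathbb{F})\rightarrow\mathcal{L}(\mathbb{F}\times\mathbb{F})$ and invoking smoothness of projective limits of smooth maps, which is exactly the resolution you anticipate.
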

\begin{proof}
Following the terminology established above, we consider
$\{(U_{\alpha }=\varprojlim U_{\alpha }^{i},\psi _{\alpha }=\varprojlim \psi
_{\alpha }^{i})\}_{\alpha \in I}$ an atlas of $M$. Each linear connection 
$\nabla^{i}$ $(i\in \mathbb{N)}$, which is naturally associated to a family of
Christoffel symbols $\{\Gamma _{\alpha }^{i}:\psi _{\alpha }^{i}(U_{\alpha
}^{i})\rightarrow \mathcal{L}(\mathbb{E}^{i},\mathcal{L}(\mathbb{E}^{i},%
\mathbb{E}^{i}))\}_{\alpha \in I}$, ensures that $T^{2}M^{i}$ is a vector
bundle over $M^{i}$ with fibres of type $\mathbb{E}^{i}$. This structure, as
already presented in Theorem~\ref{T2Mvb}, is defined by the trivializations:%
\begin{equation*}
\Phi _{\alpha }^{i}:(\pi _{2}^{i})^{-1}(U_{\alpha }^{i})\longrightarrow
U_{\alpha }^{i}\times \mathbb{E}^{i}\times \mathbb{E}^{i},
\end{equation*}%
with
\begin{equation*}
\Phi _{\alpha }^{i}([f,x]_{2}^{i})=(x,(\psi _{\alpha }^{i}\circ f)^{\prime
}(0),(\psi _{\alpha }^{i}\circ f)^{\prime \prime }(0)+\Gamma _{\alpha
}^{i}(\psi _{\alpha }^{i}(x))((\psi _{\alpha }^{i}\circ f)^{\prime
}(0))[(\psi _{\alpha }^{i}\circ f)^{\prime }(0)]);\ \alpha \in I.
\end{equation*}

The families of mappings $\{g^{ji}\}_{i,j\in
\mathbb{N}}$, $\{\varphi ^{ji}\}_{i,j\in \mathbb{N}}$, $\{\rho
^{ji}\}_{i,j\in \mathbb{N}}$ are connecting morphisms of the projective
systems $T^{2}M=\varprojlim (T^{2}M^{i})$, $M=\varprojlim M^{i}$, $\mathbb{F=%
}\varprojlim \mathbb{E}^{i}$ respectively. These projections
$\{\pi _{2}^{i}:T^{2}M^{i}\rightarrow M^{i}\}_{i\in \mathbb{N}}$ satisfy
\begin{equation*}
\varphi ^{ji}\circ \pi _{2}^{j}=\pi _{2}^{i}\circ g^{ji}\text{ \ }(j\geq i%
\mathbb{)}
\end{equation*}
and the trivializations $\{\Phi _{\alpha }^{i}\}_{i\in \mathbb{N}}$
\begin{equation*}
(\varphi ^{ji}\times \rho ^{ji}\times \rho ^{ji})\circ \Phi _{\alpha
}^{j}=\Phi _{\alpha }^{i}\circ g^{ji}\text{ \ }(j\geq i\mathbb{).}
\end{equation*}
We obtain the surjection $\pi _{2}=\varprojlim \pi _{2}^{i}:T^{2}M\longrightarrow M$
and,
\begin{equation*}
\Phi _{\alpha }=\varprojlim \Phi _{\alpha }^{i}:\pi _{2}^{-1}(U_{\alpha
})\longrightarrow U_{\alpha }\times \mathbb{F}\times \mathbb{F}\text{ \ }
(\alpha \in I)
\end{equation*}
is smooth, as a projective limit of smooth mappings, and its projection to
the first factor coincides with $\pi _{2}$.
The restriction to a fibre $\pi _{2}^{-1}(x)$ of $\Phi _{\alpha }$
is a bijection since $\Phi _{\alpha ,x}:=pr_{2}\circ \Phi
_{\alpha }|_{\pi _{2}^{-1}(x)}=\varprojlim (pr_{2}\circ \Phi _{\alpha
}^{i}|_{(\pi _{2}^{i})^{-1}(x)})$.

The corresponding
transition functions $\{T_{\alpha \beta }=\Phi _{\alpha ,x}\circ \Phi
_{\beta ,x}^{-1}\}_{\alpha ,\beta \in I}$  can be considered as taking
values in the generalized Lie group $\mathcal{H}^{0}(\mathbb{F}\times
\mathbb{F)}$, since $T_{\alpha \beta }=\epsilon \circ T_{\alpha \beta
}^{\ast }$, where $\{T_{\alpha \beta }^{\ast }\}_{\alpha ,\beta \in I}\ $
are the smooth mappings
\begin{equation*}
T_{\alpha \beta }^{\ast }:U_{\alpha }\cap U_{\beta }\rightarrow \mathcal{H}%
^{0}(\mathbb{F}\times \mathbb{F}):x\longmapsto (pr_{2}\circ \Phi _{\alpha
}^{i}|_{(\pi _{2}^{i})^{-1}(x)})_{i\in \mathbb{N}}
\end{equation*}
with $\epsilon $ the natural inclusion
\begin{equation*}
\epsilon :\mathcal{H}^{0}(\mathbb{F}\times \mathbb{F})\rightarrow \mathcal{L}
(\mathbb{F\times F}):(l^{i})_{i\in \mathbb{N}}\longmapsto \varprojlim l^{i}.
\end{equation*}
Hence, $T^{2}M$ admits a vector bundle structure
over $M$ with fibres of type $\mathbb{F}\times \mathbb{F}$ and structure
group $\mathcal{H}^{0}(\mathbb{F}\times \mathbb{F})$. This bundle
is isomorphic to $TM\times TM$ since they have identical
transition functions:
\begin{equation*}
T_{\alpha \beta }(x)=\Phi _{\alpha ,x}\circ \Phi _{\beta ,x}^{-1}=(d(\psi
_{a}\circ \psi _{\beta }^{-1})\circ \psi _{\beta })(x)\times (d(\psi
_{a}\circ \psi _{\beta }^{-1})\circ \psi _{\beta })(x)
\end{equation*}
\end{proof}
Also, the converse is true:
\begin{theorem}
If $T^{2}M$ is an $\mathcal{H}^{0}(\mathbb{F}\times \mathbb{F})-$Fr\'{e}chet
vector bundle over $M$ isomorphic to $TM\times TM$, then $M$ admits a linear
connection which can be realized as a projective limit of connections.
\end{theorem}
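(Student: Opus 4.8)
The plan is to reduce the statement to the Banach levels, invoke the already‑established Banach converse of Theorem~\ref{T2Mvb} on each level, and then reassemble the resulting level connections into a projective limit; the genuine work lies in showing that this limit exists. First I would exploit the hypothesis that $T^{2}M$ is an $\mathcal{H}^{0}(\mathbb{F}\times\mathbb{F})$-bundle. By the very definition of $\mathcal{H}^{0}(\mathbb{F}\times\mathbb{F})$, a trivializing atlas $\{(\pi_{2}^{-1}(U_{\alpha}),\Phi_{\alpha})\}_{\alpha\in I}$ has transition functions whose components form genuine projective systems, so I may write $\Phi_{\alpha}=\varprojlim\Phi_{\alpha}^{i}$ and $T_{\alpha\beta}=\varprojlim T_{\alpha\beta}^{i}$ with $T_{\alpha\beta}^{i}:U_{\alpha}^{i}\cap U_{\beta}^{i}\to GL(\mathbb{E}^{i}\times\mathbb{E}^{i})$. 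These cocycles endow each $T^{2}M^{i}$ (recall $T^{2}M=\varprojlim T^{2}M^{i}$) with a Banach vector bundle structure, and the given isomorphism $T^{2}M\cong TM\times TM$ projects level by level to isomorphisms $T^{2}M^{i}\cong TM^{i}\times TM^{i}$. Adapting the trivializations to this isomorphism, I may assume that the first fibre coordinate of each $\Phi_{\alpha}^{i}$ is the velocity $(\psi_{\alpha}^{i}\circ f)'(0)$ and that the transition functions are exactly the diagonal maps $d\sigma_{\alpha\beta}^{i}\times d\sigma_{\alpha\beta}^{i}$, where $\sigma_{\alpha\beta}^{i}=\psi_{\alpha}^{i}\circ(\psi_{\beta}^{i})^{-1}$.

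Next I would apply the Banach converse of Theorem~\ref{T2Mvb} (proved in~\cite{Dod-Gal1}) on each level. Introducing the naive second‑order chart map
\begin{equation*}
\tau_{\alpha}^{i}([f,x]_{2}^{i})=(x,(\psi_{\alpha}^{i}\circ f)'(0),(\psi_{\alpha}^{i}\circ f)''(0)),
\end{equation*}
which is a fibrewise bijection but transforms non‑tensorially, $\tau_{\alpha}^{i}\circ(\tau_{\beta}^{i})^{-1}$ carrying the extra second‑differential term $d^{2}\sigma_{\alpha\beta}^{i}(u,u)$, one compares it with the diagonal transition of $\Phi^{i}$. The composite $\Phi_{\alpha}^{i}\circ(\tau_{\alpha}^{i})^{-1}$ then fixes the base and the first fibre coordinate and acts on the second by $v\mapsto v+\Gamma_{\alpha}^{i}(\psi_{\alpha}^{i}(x))(u)[u]$ for a smooth $\Gamma_{\alpha}^{i}:\psi_{\alpha}^{i}(U_{\alpha}^{i})\to\mathcal{L}(\mathbb{E}^{i},\mathcal{L}(\mathbb{E}^{i},\mathbb{E}^{i}))$, and the matching of the two transitions is precisely the compatibility condition~(1) for the family $\{\Gamma_{\alpha}^{i}\}$. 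Thus each $M^{i}$ acquires a linear connection $\nabla^{i}$ with Christoffel symbols $\Gamma_{\alpha}^{i}$, recovered concretely as the deviation of the given trivialization from the naive second‑order coordinates.

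Finally I would verify that $\{\nabla^{i}\}_{i\in\mathbb{N}}$ is a projective system, so that $\nabla:=\varprojlim\nabla^{i}$ exists and furnishes the required connection. Because all the $\Phi_{\alpha}^{i}$ descend from the single Fr\'{e}chet trivialization, they satisfy $(\varphi^{ji}\times\rho^{ji}\times\rho^{ji})\circ\Phi_{\alpha}^{j}=\Phi_{\alpha}^{i}\circ g^{ji}$ for $j\geq i$, while the naive maps $\tau_{\alpha}^{i}$ obey the analogous relation since differentiation commutes with the linear connecting morphisms $\rho^{ji}$. Combining the two relations forces
\begin{equation*}
\rho^{ji}\big(\Gamma_{\alpha}^{j}(y)(u)[u]\big)=\Gamma_{\alpha}^{i}(\rho^{ji}(y))(\rho^{ji}u)[\rho^{ji}u],\qquad j\geq i,
\end{equation*}
which is exactly the condition that $\Gamma_{\alpha}:=\varprojlim\Gamma_{\alpha}^{i}$ be well defined and that $\nabla=\varprojlim\nabla^{i}$ exist as a linear connection on $M$, characterized by the generalized Christoffel symbols $\{\Gamma_{\alpha}\}$.

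I expect this last compatibility check to be the main obstacle. It is precisely here that the hypothesis ``$\mathcal{H}^{0}(\mathbb{F}\times\mathbb{F})$-bundle,'' rather than merely ``some Fr\'{e}chet vector bundle,'' does the decisive work: it guarantees that the trivializations and transition cocycles, and hence the recovered Christoffel symbols, descend coherently through the entire projective tower rather than existing only separately at each finite stage. Without that coherence the individual connections $\nabla^{i}$ need not fit together, and the projective limit defining $\nabla$ would fail to exist.
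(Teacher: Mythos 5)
The paper itself contains no proof of this theorem: it is stated after the words ``Also, the converse is true'' and is imported wholesale from~\cite{Dod-Gal1}, so your proposal can only be compared with the argument intended there. Your overall architecture --- use the $\mathcal{H}^{0}(\mathbb{F}\times\mathbb{F})$ hypothesis to descend to the Banach factors, invoke the Banach converse of Theorem~\ref{T2Mvb} on each $M^{i}$, and reassemble the levelwise connections into $\nabla=\varprojlim\nabla^{i}$ --- is exactly that intended projective-limit strategy, and you correctly locate the crux in the coherence of the Christoffel symbols $\Gamma^{i}_{\alpha}$ across the tower rather than in their levelwise existence.

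There is, however, a genuine gap in your middle step. Composing the given trivializations with the isomorphism $T^{2}M\cong TM\times TM$ does let you assume the cocycle is the diagonal one $d\sigma^{i}_{\alpha\beta}\times d\sigma^{i}_{\alpha\beta}$, but nothing in the hypotheses forces the resulting trivializations to respect the canonical projection $T^{2}M\to TM$, $[f,x]_{2}\mapsto [f,x]_{1}$. Hence your two assertions --- that the first fibre coordinate of $\Phi^{i}_{\alpha}$ is $(\psi^{i}_{\alpha}\circ f)'(0)$, and that $\Phi^{i}_{\alpha}\circ(\tau^{i}_{\alpha})^{-1}$ is a shear $(u,v)\mapsto\bigl(u,\,v+\Gamma^{i}_{\alpha}(y)(u)[u]\bigr)$ --- are assumed, not derived. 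Concretely: if $\{\Phi_{\alpha}\}$ is an atlas with both properties, then composing it with the global automorphism of $TM\times TM$ that swaps the two factors preserves the diagonal cocycle (the cocycle acts identically on both factors) yet yields an atlas for which $\Phi_{\alpha}\circ\tau_{\alpha}^{-1}$ exchanges the velocity and acceleration coordinates and is no longer a shear, so your recipe ``$\Gamma$ equals the deviation from the naive coordinates'' produces nothing. In general all one gets from the hypotheses is that $h_{\alpha}:=\Phi_{\alpha}\circ\tau_{\alpha}^{-1}$ satisfies $h_{\alpha}\circ N_{\alpha\beta}=D_{\alpha\beta}\circ h_{\beta}$, where $N_{\alpha\beta}$ is the naive (non-tensorial) transition and $D_{\alpha\beta}$ the diagonal one; extracting Christoffel symbols from such a family requires a further normalization argument, or else an appeal to the precise formulation of the Banach converse in~\cite{Dod-Gal1}, whose hypotheses build in exactly this compatibility. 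Since your definition of $\Gamma^{i}_{\alpha}$ --- and therefore the final check $\rho^{ji}\bigl(\Gamma^{j}_{\alpha}(y)(u)[u]\bigr)=\Gamma^{i}_{\alpha}(\rho^{ji}y)(\rho^{ji}u)[\rho^{ji}u]$, which is otherwise correctly identified as the decisive point --- rests on this unproved normal form, the gap propagates through the remainder of the proof. A secondary, smaller issue: writing $\Phi_{\alpha}=\varprojlim\Phi^{i}_{\alpha}$ and $T^{i}_{\alpha\beta}$ as maps defined on $U^{i}_{\alpha}\cap U^{i}_{\beta}\subset M^{i}$ silently assumes the level components factor through the projections $M\to M^{i}$; this holds if ``$\mathcal{H}^{0}$-Fr\'{e}chet vector bundle'' means ``projective limit of Banach vector bundles'' (as in the paper's framework), but it does not follow from the pointwise definition of $\mathcal{H}^{0}(\mathbb{F}\times\mathbb{F})$ alone, so you should state which reading you use.
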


\section{Fr\'{e}chet second frame bundle}
Let $M=\varprojlim M^{i}$ be a manifold with connecting morphisms $\{\varphi
^{ji}:M^{j}\rightarrow M^{i}\}_{i,j\in \mathbb{N}}$ and Fr\'{e}chet space model the
limit $\mathbb{F}$ of a projective system of Banach spaces 
$\{\mathbb{F}^{i};\rho ^{ji}\}_{i,j\in \mathbb{N}}$. Following the results obtained in
\cite{Dod-Gal1}, if $M$ is endowed with a linear
connection $\nabla=\varprojlim \nabla^{i}$, then $T^{2}M$ admits a vector bundle
structure over $M$ with fibres of Fr\'{e}chet type $\mathbb{F}\times \mathbb{F}$. Then $T^{2}M$ becomes also a projective limit of manifolds
via the identification $T^{2}M\simeq \varprojlim T^{2}M^{i}$.

Let
\begin{equation*}
\mathcal{F}^{2}M^{i}=\underset{x^{i}\in M^{i}}{\cup }%
\{(h^{k})_{k=1,...,i}:h^{k}\in \mathcal{L}is(\mathbb{F}^{k}\times \mathbb{F}%
^{k},T_{\varphi ^{ik}(x^{i})}^{2}M^{k})\text{ and }
\end{equation*}%
\begin{equation*}
g^{mk}\circ h^{m}=h^{k}\circ (\rho ^{mk}\times \rho ^{mk}),\text{ }i\geq
m\geq k\}.
\end{equation*}
We replace the pathological
general linear group $GL(\mathbb{F})$ by
\begin{equation*}
H_{0}(\mathbb{F}):=H_{0}(\mathbb{F},\mathbb{F})=\{(l^{i})_{i\in \mathbb{N}%
}\in \prod_{i=1}^{\infty }GL(\mathbb{F}^{i}):\,\,\varprojlim l^{i}\,\text{\
exists}\}.
\end{equation*}%
The latter can be thought of also as a generalized Fr\'{e}chet Lie group by
being embedded in $H(\mathbb{F}):=H(\mathbb{F},\mathbb{F})$.
Then~\cite{DGV 1},
\begin{theorem}$\mathcal{F}^{2}M^{i}$ is a principal fibre bundle over $M^{i}$ with
structure group the Banach Lie group $H_{0}^{i}(\mathbb{F\times F}%
):=H_{0}^{i}(\mathbb{F\times F},\mathbb{F\times F})$.\\
The limit $\varprojlim \mathcal{F}^{2}M^{i}$ is a Fr\'{e}chet principal
bundle over $M$ with structure group $H_{0}(\mathbb{F\times F})$.
\end{theorem}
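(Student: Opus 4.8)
The plan is to treat the two assertions in turn: first establish the Banach principal-bundle structure on each $\mathcal{F}^2M^i$, then pass to the projective limit. Throughout I would use the trivializations $\Phi_\alpha^k$ of the vector bundles $T^2M^k$ supplied by Theorem~\ref{T2Mvb}, together with the compatibility relation $(\varphi^{mk}\times\rho^{mk}\times\rho^{mk})\circ\Phi_\alpha^m=\Phi_\alpha^k\circ g^{mk}$ $(m\ge k)$ already recorded in the proof of the $T^2M$ theorem; restricted to fibres this reads $\Phi_{\alpha,x^k}^k\circ g^{mk}=(\rho^{mk}\times\rho^{mk})\circ\Phi_{\alpha,x^m}^m$, and this single identity is the engine of the whole argument. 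The natural projection is $p^i\colon\mathcal{F}^2M^i\to M^i$, $(h^k)_{k=1,\dots,i}\mapsto x^i$, and the group $H_0^i(\mathbb{F}\times\mathbb{F})$ — the $i$-tuples $(l^k)$ of isomorphisms of $\mathbb{F}^k\times\mathbb{F}^k$ satisfying $(\rho^{mk}\times\rho^{mk})\circ l^m=l^k\circ(\rho^{mk}\times\rho^{mk})$ — acts on the right by $(h^k)\cdot(l^k)=(h^k\circ l^k)$.

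For the first assertion I would verify the principal-bundle axioms. Freeness is immediate since each $h^k$ is invertible; transitivity on a fibre follows by setting $l^k=(h^k)^{-1}\circ\tilde h^k$ for two frames $(h^k),(\tilde h^k)$ over the same point and checking, from the two frame-coherence relations $g^{mk}\circ h^m=h^k\circ(\rho^{mk}\times\rho^{mk})$, that $(l^k)$ indeed lies in $H_0^i$. Local triviality comes from
\[
\Psi_\alpha^i\colon (p^i)^{-1}(U_\alpha^i)\longrightarrow U_\alpha^i\times H_0^i(\mathbb{F}\times\mathbb{F}),\qquad
(h^k)_{k=1,\dots,i}\longmapsto\bigl(x^i,(\Phi_{\alpha,x^k}^k\circ h^k)_{k=1,\dots,i}\bigr),
\]
where $x^k=\varphi^{ik}(x^i)$; that the image tuple satisfies the coherence relation defining $H_0^i$, and that $\Psi_\alpha^i$ is an equivariant bijection, both reduce to the fibrewise compatibility identity above. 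The transition functions are $x^i\mapsto(\Phi_{\alpha,x^k}^k\circ(\Phi_{\beta,x^k}^k)^{-1})_k=(T_{\alpha\beta}^k(x^k))_k$, smooth because each $T_{\alpha\beta}^k$ is smooth by Theorem~\ref{T2Mvb}. Finally $H_0^i(\mathbb{F}\times\mathbb{F})$ is a closed subgroup of the product $\prod_{k=1}^i GL(\mathbb{F}^k\times\mathbb{F}^k)$ of Banach--Lie groups cut out by the linear intertwining relations, hence a Banach--Lie group, exactly as for the analogous $\mathcal{H}_i^0$ above.

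For the second assertion I would exhibit $\{\mathcal{F}^2M^i\}$ as a projective system under the truncation maps $\mathcal{F}^2M^j\to\mathcal{F}^2M^i$, $(h^k)_{k\le j}\mapsto(h^k)_{k\le i}$ $(j\ge i)$, which cover the $\varphi^{ji}$ and are smooth principal-bundle morphisms over the group truncations $H_0^j\to H_0^i$. The structure groups form the matching system with $\varprojlim H_0^i(\mathbb{F}\times\mathbb{F})=H_0(\mathbb{F}\times\mathbb{F})$, and the trivializations $\Psi_\alpha^i$ are compatible with the truncations (again by the fibrewise identity), so $\Psi_\alpha=\varprojlim\Psi_\alpha^i$ is a well-defined trivialization over $U_\alpha=\varprojlim U_\alpha^i$ with fibre $H_0(\mathbb{F}\times\mathbb{F})$. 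The transition functions $T_{\alpha\beta}=\varprojlim T_{\alpha\beta}^i$ then take values in $H_0(\mathbb{F}\times\mathbb{F})$, regarded as a generalized Fr\'echet--Lie group via its embedding in $H(\mathbb{F}\times\mathbb{F})$. Assembling these data by the projective-limit machinery of Galanis and Vassiliou~\cite{DGV 1} yields the Fr\'echet principal bundle $\varprojlim\mathcal{F}^2M^i$ over $M$.

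I expect the finite-level axioms to be routine once the fibrewise compatibility identity is in hand; the genuine difficulty lies in the passage to the limit. The structure group $H_0(\mathbb{F}\times\mathbb{F})$ is \emph{not} a Lie group in the classical sense — this is precisely the pathology of $GL(\mathbb{F}\times\mathbb{F})$ that forced its introduction — so one cannot invoke the standard theorem that a projective limit of principal bundles is a principal bundle. The hard part will be to confirm that the limit charts $\Psi_\alpha$ are genuine trivializations and that the $T_{\alpha\beta}$ are smooth in the generalized sense compatible with $H_0(\mathbb{F}\times\mathbb{F})\hookrightarrow H(\mathbb{F}\times\mathbb{F})$, so that the set-theoretic limit $\varprojlim\mathcal{F}^2M^i$ genuinely acquires a Fr\'echet principal-bundle structure rather than merely a fibred-set structure.
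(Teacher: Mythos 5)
The paper itself offers no proof of this theorem --- it is quoted from \cite{DGV 1} --- but your proposal is correct and follows precisely the template the paper does exhibit in its proof of the Fr\'{e}chet $T^{2}M$ theorem: connection-induced fibrewise trivializations $\Phi^{k}_{\alpha,x^{k}}$ at each Banach level, coherence with the connecting morphisms $g^{mk}$ and $\rho^{mk}\times\rho^{mk}$, and a projective limit whose transition functions are regarded as valued in the generalized Lie group through its natural embedding. Your finite-level verifications (freeness, fibre transitivity via $l^{k}=(h^{k})^{-1}\circ\tilde{h}^{k}$, the equivariant charts $\Psi^{i}_{\alpha}$, smooth $H^{i}_{0}$-valued cocycles cut out by linear intertwining relations) and the truncation-map projective system are exactly the intended argument, and your closing caution --- that the genuine work lies in checking the limit charts and the generalized smoothness of $T_{\alpha\beta}$ rather than invoking a standard limit theorem --- correctly locates where the difficulty sits.
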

We call the \textit{generalized bundle of frames} \textit{of order two} of
the Fr\'{e}chet manifold $M=\varprojlim M^{i}$ the principal bundle
\begin{equation*}
\mathcal{F}^{2}(M):=\varprojlim \mathcal{F}^{2}M^{i}.
\end{equation*}
This is a natural generalization of the usual frame bundle and it follows
\begin{theorem}
For the action of the group $H^{0}(\mathbb{F}\times \mathbb{F})$ on the
right of the product $\mathcal{F}^{2}(M)\times (\mathbb{F}\times \mathbb{F}):$
\begin{equation*}
((h^{i}),(u^{i},v^{i}))_{i\in \mathbb{N}}\cdot (g^{i})_{i\in \mathbb{N}%
}=((h^{i}\circ g^{i}),(g^{i})^{-1}(u^{i},v^{i}))_{i\in \mathbb{N}},
\end{equation*}%
the quotient space $\mathcal{F}^{2}M\times (\mathbb{F}\times \mathbb{F}
)\diagup H^{0}(\mathbb{F}\times \mathbb{F})$ is isomorphic with $T^{2}M$.
\end{theorem}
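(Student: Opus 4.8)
The plan is to read the statement as the standard \emph{associated vector bundle} identification: $T^{2}M$ should be the bundle associated to the principal $H^{0}(\mathbb{F}\times\mathbb{F})$-bundle $\mathcal{F}^{2}(M)$ through the defining action of the structure group on the fibre type $\mathbb{F}\times\mathbb{F}$. First I would fix an evaluation map. A point of $\mathcal{F}^{2}(M)$ lying over $x\in M$ is a compatible sequence $h=(h^{i})_{i\in\mathbb{N}}$ whose projective limit $\varprojlim h^{i}$ is a linear isomorphism $\mathbb{F}\times\mathbb{F}\rightarrow T^{2}_{x}M$. I would set
\begin{equation*}
\Theta:\mathcal{F}^{2}(M)\times(\mathbb{F}\times\mathbb{F})\longrightarrow T^{2}M,\qquad
\Theta\big((h^{i})_{i},(u^{i},v^{i})_{i}\big)=\big(\varprojlim h^{i}\big)\big((u^{i},v^{i})_{i}\big),
\end{equation*}
which sends a frame together with a coordinate vector to the corresponding element of the fibre $T^{2}_{x}M$.

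Next I would verify that $\Theta$ is constant on the orbits of the given right action. Using $\varprojlim(h^{i}\circ g^{i})=(\varprojlim h^{i})\circ(\varprojlim g^{i})$ together with $\varprojlim((g^{i})^{-1})=(\varprojlim g^{i})^{-1}$, a direct substitution into $\Theta\big((h^{i}\circ g^{i})_{i},((g^{i})^{-1}(u^{i},v^{i}))_{i}\big)$ collapses the two copies of $\varprojlim g^{i}$ and returns $(\varprojlim h^{i})((u^{i},v^{i})_{i})$; hence $\Theta$ factors through the quotient to give a map $\bar\Theta$ from $\mathcal{F}^{2}M\times(\mathbb{F}\times\mathbb{F})\diagup H^{0}(\mathbb{F}\times\mathbb{F})$ to $T^{2}M$. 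Bijectivity then follows purely from each frame $\varprojlim h^{i}$ being a linear isomorphism onto $T^{2}_{x}M$: surjectivity because any $w\in T^{2}_{x}M$ equals $\Theta(h,(\varprojlim h^{i})^{-1}w)$, and injectivity because if $\Theta(h,\xi)=\Theta(h',\xi')$ with $\xi,\xi'\in\mathbb{F}\times\mathbb{F}$ then the two frames lie over the same $x$, and setting $g=(g^{i})_{i}$ with $g^{i}=(h^{i})^{-1}\circ(h')^{i}\in H^{0}(\mathbb{F}\times\mathbb{F})$ one checks that $(h',\xi')=(h,\xi)\cdot g$, so the two points already coincide in the quotient.

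The remaining, genuinely delicate point is to promote the set-theoretic bijection $\bar\Theta$ to an isomorphism of Fr\'echet vector bundles over $M$. That $\bar\Theta$ covers $\mathrm{id}_{M}$ and is fibrewise linear is immediate, each fibre of the associated bundle being identified by the linear isomorphism $\varprojlim h^{i}$. For smoothness of $\bar\Theta$ and of its inverse I would argue levelwise: at each Banach stage $i$ the same evaluation yields the classical associated-bundle isomorphism of $\mathcal{F}^{2}M^{i}\times_{H_{0}^{i}(\mathbb{F}\times\mathbb{F})}(\mathbb{F}^{i}\times\mathbb{F}^{i})$ with $T^{2}M^{i}$, coinciding (as in Theorem~\ref{T2Mvb}) with the trivialisations $\Phi_{\alpha}^{i}$. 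I would then check that these level isomorphisms commute with the connecting morphisms $g^{ji}$, $\varphi^{ji}$ and $\rho^{ji}$, so that $\bar\Theta=\varprojlim\bar\Theta^{i}$ is a projective limit of smooth Banach vector-bundle isomorphisms, and is therefore itself smooth with smooth inverse in the generalized sense appropriate to $H^{0}(\mathbb{F}\times\mathbb{F})$.

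The hard part is precisely this last step: in the Fr\'echet/projective-limit category one cannot simply invoke the Banach associated-bundle theorem, since smoothness is meant in the Leslie sense and the structure group $H^{0}(\mathbb{F}\times\mathbb{F})$ is only a generalized Lie group embedded in $\mathcal{L}(\mathbb{F}\times\mathbb{F})$. Concretely, the work is to confirm the compatibility of the level isomorphisms with every connecting morphism, so that the limit $\varprojlim\bar\Theta^{i}$ exists and both $\bar\Theta$ and $\bar\Theta^{-1}$ inherit smoothness — equivalently, that under $\bar\Theta$ the local trivialisations of the associated bundle match those of $T^{2}M$ with transition functions valued in $H^{0}(\mathbb{F}\times\mathbb{F})$, namely the cocycle $\{(d\sigma_{\alpha\beta}\circ\psi_{\beta})\times(d\sigma_{\alpha\beta}\circ\psi_{\beta})\}$ already identified for $T^{2}M$.
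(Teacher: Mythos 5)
You should know at the outset that this survey paper contains no proof of this theorem at all: it is stated as a quoted result, with the actual argument residing in the cited source \cite{DGV 1}. So there is no in-paper proof to compare yours against; what can be said is that your proposal is the natural associated-bundle argument that the statement is designed to invoke, and it is essentially sound. The evaluation map $\Theta(h,\xi)=(\varprojlim h^{i})(\xi)$, its invariance under the right action, the descended bijection $\bar\Theta$, and the levelwise-plus-limit treatment of smoothness are exactly the right skeleton, and your identification of the delicate point (compatibility with the connecting morphisms so that $\bar\Theta=\varprojlim\bar\Theta^{i}$ exists in the projective-limit category) is correctly placed.

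Two checks you defer are genuinely needed, though both are routine. First, that $\varprojlim h^{i}$ is an \emph{isomorphism} onto $T^{2}_{x}M$: a projective limit of surjections need not be surjective, so the correct argument is that the compatibility relations $g^{mk}\circ h^{m}=h^{k}\circ(\rho^{mk}\times\rho^{mk})$ defining $\mathcal{F}^{2}M^{i}$ are inherited by the inverse family, i.e. $(\rho^{mk}\times\rho^{mk})\circ (h^{m})^{-1}=(h^{k})^{-1}\circ g^{mk}$, so that $\varprojlim (h^{i})^{-1}$ exists and is a two-sided inverse of $\varprojlim h^{i}$; the same observation legitimises your injectivity step, where you need $g^{i}=(h^{i})^{-1}\circ (h')^{i}$ to form a compatible family lying in $H^{0}(\mathbb{F}\times\mathbb{F})$. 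Second, your appeal to the ``classical associated-bundle isomorphism'' at each Banach stage needs the remark that $\mathcal{F}^{2}M^{i}$ is not the full frame bundle of $T^{2}M^{i}$ but the bundle of compatible $i$-tuples of frames, with the smaller structure group $H_{0}^{i}$ acting through its top component on $\mathbb{F}^{i}\times\mathbb{F}^{i}$; what saves the argument is that $H_{0}^{i}$ still acts simply transitively on the fibres of $\mathcal{F}^{2}M^{i}$ (given two compatible tuples, $l^{k}=(h^{k})^{-1}\circ(h')^{k}$ is again compatible), so the quotient construction behaves as in the classical case. With these two observations inserted, your levelwise isomorphisms commute with the truncation morphisms of $\{\mathcal{F}^{2}M^{i}\}$ and with $g^{ji}$, $\rho^{ji}$, and the limit argument, together with the matching of the cocycle $\{(d\sigma_{\alpha\beta}\circ\psi_{\beta})\times(d\sigma_{\alpha\beta}\circ\psi_{\beta})\}$, completes the proof.
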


Consider a connection of $\mathcal{F}^{2}(M)$
represented by the 1-form $\omega \in \Lambda ^{1}(\mathcal{F}^{2}(M),%
\mathcal{L}(\mathbb{F}\times \mathbb{F}))$, with smooth atlas $\{(U_{\alpha
}=\varprojlim U_{\alpha }^{i},\psi _{\alpha }=\varprojlim \psi _{\alpha
}^{i})\}_{a\in I}$ of $M$, $\{(p^{-1}(U_{\alpha }),\Psi _{\alpha })\}_{a\in
I}$  trivializations of $\mathcal{F}^{2}(M)$ and $\{\omega
_{\alpha }:=s_{\alpha }^{\ast }\omega \}_{a\in I}$ the corresponding local
forms of $\omega $ obtained as pull-backs with respect to the natural local
sections $\{s_{\alpha }\}$ of $\{\Psi _{\alpha }\}.$ Then a (unique) linear
connection can be defined on $T^{2}M$ by means of the Christoffel symbols
\begin{equation*}
\Gamma _{\alpha }:\psi _{\alpha }(U_{\alpha })\rightarrow \mathcal{L}(%
\mathbb{F}\times \mathbb{F},\mathcal{L}(\mathbb{F},\mathbb{F}\times \mathbb{%
F))}
\end{equation*}%
with $([\Gamma _{\alpha }(y)](u))(v)=\omega _{\alpha }(\psi _{\alpha
}^{-1}(y))(T_{y}\psi _{\alpha }^{-1}(v))(u),$ $(y,u,v)\in \psi _{\alpha
}(U_{\alpha })\times \mathbb{F}\times \mathbb{F}\times \mathbb{F}$.

However, in the framework of Fr\'{e}chet bundles an arbitrary connection is
not always easy to handle, since Fr\'{e}chet manifolds and bundles lack
a general theory of solvability for linear differential equations. Also,
Christoffel symbols (in the case of vector
bundles) or the local forms (in principal bundles) are affected in their
representation of linear maps by
the fact that continuous linear mappings of a Fr\'{e}chet space
do not remain in the same category.
Galanis~\cite{Gal2,Gal3} solved the problem for connections
that can be obtained as projective limits and we obtain~\cite{DGV 1}
\begin{theorem}
Let $\nabla $ be a linear connection of the second order tangent bundle $%
T^{2}M=\varprojlim T^{2}M^{i}$ that can be represented as a projective limit
of linear connections $\nabla ^{i}$ on the (Banach modelled) factors. Then $%
\nabla $ corresponds to a connection form $\omega $ of $\mathcal{F}^{2}M$
obtained also as a projective limit.
\end{theorem}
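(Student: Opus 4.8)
The plan is to build $\omega$ factor-by-factor from the given $\nabla^i$ and then pass to the projective limit, exploiting that the Banach correspondence of Theorem~2.2 is available on every factor and that $\mathcal{F}^2M$, $T^2M$, the structure group $H_0(\mathbb{F}\times\mathbb{F})$ and the model $\mathbb{F}$ are all already realized as projective limits. First I would fix the atlas $\{(U_\alpha=\varprojlim U_\alpha^i,\ \psi_\alpha=\varprojlim\psi_\alpha^i)\}_{\alpha\in I}$ and recall that each $\mathcal{F}^2M^i$ is a Banach principal bundle over $M^i$ with structure group $H_0^i(\mathbb{F}\times\mathbb{F})$ (Theorem~4.1), with $T^2M^i$ as its associated vector bundle (cf. Theorem~4.2). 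Applying Theorem~2.2 on each factor, the linear connection $\nabla^i$ determines a unique principal connection $\omega^i\in\Lambda^1(\mathcal{F}^2M^i,\mathcal{L}(\mathbb{F}^i\times\mathbb{F}^i))$ whose local forms $\omega_\alpha^i=(s_\alpha^i)^\ast\omega^i$ are tied to the Christoffel symbols $\Gamma_\alpha^i$ of $\nabla^i$ by the relation $([\Gamma_\alpha^i(y)](u))(v)=\omega_\alpha^i((\psi_\alpha^i)^{-1}(y))(T_y(\psi_\alpha^i)^{-1}(v))(u)$ displayed above.

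Next I would show that $\{\omega^i\}_{i\in\mathbb{N}}$ is a projective system over the connecting morphisms of $\mathcal{F}^2M=\varprojlim\mathcal{F}^2M^i$, i.e. that the bundle morphism $\mathcal{F}^2M^j\to\mathcal{F}^2M^i$ $(j\ge i)$ intertwines $\omega^j$ with $\omega^i$. The hypothesis $\nabla=\varprojlim\nabla^i$ means precisely that the Christoffel families are limit-compatible, $\rho^{ji}$ carrying $\Gamma_\alpha^j$ onto $\Gamma_\alpha^i$ after the appropriate restriction; since the trivializing sections $s_\alpha^i$ are induced by the single common atlas and are themselves limit-compatible, substituting this into the correspondence formula yields the required intertwining of the $\omega_\alpha^i$. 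I would carry this out chart-by-chart, the gluing within each level being already guaranteed by the compatibility condition~(1) on the $\Gamma_\alpha^i$.

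With the projective system in hand I would set $\omega:=\varprojlim\omega^i$ and verify it is a connection form on $\mathcal{F}^2(M)$. Its values lie in $\mathcal{L}(\mathbb{F}\times\mathbb{F})\cong\varprojlim\mathcal{L}(\mathbb{F}^i\times\mathbb{F}^i)$; smoothness in the Leslie sense is inherited since a projective limit of smooth maps is smooth; and the two defining properties of a principal connection---right-equivariance under $H_0(\mathbb{F}\times\mathbb{F})=\varprojlim H_0^i(\mathbb{F}\times\mathbb{F})$ and the prescribed value on fundamental vector fields---hold at every level and hence survive the limit. Reading the correspondence formula in the limit returns the Christoffel symbols $\Gamma_\alpha=\varprojlim\Gamma_\alpha^i$ of the given $\nabla$, so $\omega$ is indeed the connection form corresponding to $\nabla$ and is by construction a projective limit, which is the assertion.

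I expect the essential obstacle to be the second step. Connection forms are not tensorial: under a change of structure group---here along the homomorphisms $H_0^j(\mathbb{F}\times\mathbb{F})\to H_0^i(\mathbb{F}\times\mathbb{F})$ induced by $\rho^{ji}$---a connection picks up an inhomogeneous Maurer--Cartan term, so compatibility of the $\omega^i$ cannot be asserted directly on the forms but must be forced through the underlying connections. What makes this succeed is that a \emph{single} $\nabla$ underlies all the $\nabla^i$: the inhomogeneous contributions are governed by the same second-order data, the $d^2\sigma_{\alpha\beta}$ terms already reconciled by the compatibility condition~(1), and these are themselves limit-compatible because the charts $\psi_\alpha^i$ are restrictions of the common $\psi_\alpha$. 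Once the limit-compatibility of the Christoffel symbols is used, no genuine obstruction survives. This is where the projective-limit technique of Galanis~\cite{Gal2,Gal3} is decisive, both in guaranteeing that the limit form exists and in transporting the bijective correspondence of Theorem~2.2 into the Fr\'{e}chet category.
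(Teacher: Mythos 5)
You should know at the outset that this survey does not actually prove the statement: it is quoted from the cited work of Dodson--Galanis--Vassiliou on the generalized second order frame bundle, and the paper only records the correspondence formula $([\Gamma _{\alpha }(y)](u))(v)=\omega _{\alpha }(\psi _{\alpha}^{-1}(y))(T_{y}\psi _{\alpha }^{-1}(v))(u)$ linking local forms to Christoffel symbols, together with the remark that the construction rests on Galanis' projective-limit technique. So your proposal can only be compared against the strategy implicit in that surrounding text --- and your overall architecture (build connection data factor-by-factor from the $\nabla^{i}$, verify compatibility with the connecting morphisms using the limit-compatibility of the Christoffel symbols and the common atlas, then pass to the limit) is indeed that strategy. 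Your resolution of the Maurer--Cartan worry is also sound: the connecting morphisms of $\mathcal{F}^{2}M$ are principal morphisms along the truncation homomorphisms, and the sections $s_{\alpha}^{i}$ are limit-compatible, so compatibility of connections reduces to a clean intertwining of local forms with no inhomogeneous term.

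There is, however, a genuine gap in your second step. Theorem 2.2 is a statement about the Banach frame bundle $L^{2}(M^{i})=\underset{x\in M^{i}}{\cup }\mathcal{L}is(\mathbb{F}^{i}\times \mathbb{F}^{i},T_{x}^{2}M^{i})$, whose structure group is $GL(\mathbb{F}^{i}\times \mathbb{F}^{i})$; it does not produce a connection on $\mathcal{F}^{2}M^{i}$. By its definition in Section 4, a point of $\mathcal{F}^{2}M^{i}$ is a \emph{compatible tuple} $(h^{1},\dots ,h^{i})$ of frames at all levels $k\leq i$, and its structure group is the Banach--Lie group $H_{0}^{i}(\mathbb{F}\times \mathbb{F})$, whose Lie algebra consists of compatible tuples $(l^{1},\dots ,l^{i})$ with $\rho ^{jk}\circ l^{j}=l^{k}\circ \rho ^{jk}$ --- not of single operators in $\mathcal{L}(\mathbb{F}^{i}\times \mathbb{F}^{i})$. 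So ``$\nabla^{i}$ alone determines $\omega ^{i}\in \Lambda ^{1}(\mathcal{F}^{2}M^{i},\mathcal{L}(\mathbb{F}^{i}\times \mathbb{F}^{i}))$'' does not typecheck: to obtain a principal connection on $\mathcal{F}^{2}M^{i}$ you must use the whole compatible family $\nabla ^{1},\dots ,\nabla ^{i}$, pulling back each frame-bundle connection $\omega _{L}^{k}$ ($k\leq i$) along the canonical maps $\mathcal{F}^{2}M^{i}\rightarrow L^{2}(M^{k})$ and checking that the resulting tuple lands in the Lie algebra of $H_{0}^{i}$ --- which is exactly where the projective-system hypothesis on the connections enters already at each finite level, not only in the final limit. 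Relatedly, the isomorphism you invoke, $\mathcal{L}(\mathbb{F}\times \mathbb{F})\cong \varprojlim \mathcal{L}(\mathbb{F}^{i}\times \mathbb{F}^{i})$, is false: the spaces $\mathcal{L}(\mathbb{F}^{i}\times \mathbb{F}^{i})$ do not even form a projective system (there are no natural connecting maps), which is precisely the pathology that forces the introduction of $H_{0}$ and $H$ in the first place; the values of $\omega$ lie in the image of $H(\mathbb{F}\times \mathbb{F})$ under the inclusion $\epsilon$, in general a proper subset of $\mathcal{L}(\mathbb{F}\times \mathbb{F})$. With these repairs --- the correct factor-level bundles, Lie-algebra values, and target space --- your outline goes through, but as written the factor-wise step constructs connections on the wrong bundles.
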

Areas of application were outlined in~\cite{DGV 1}.

\section{Connection choice}
Dodson, Galanis and Vassiliou~\cite{DGV 2} studied the way in which the choice of connection influenced the structure of the second tangent bundle over Fr\'{e}chet manifolds, since each connection determines one isomorphism of $T^2M\equiv TM\bigoplus TM.$ They defined the second order differential $T^{2}f$ of a smooth map
$g:M\rightarrow N$ between two manifolds $M$ and $N$. In contrast
to the case of the first order differential $Tg$, the linearity of
$T^{2}g$ on the fibres ($T_{x}^{2}g:T_{x}^{2}M\rightarrow T_{g(x)}^{2}N$, $x\in M$)
is not always ensured but they proved a number of results.

The connections $\nabla _{M}$ and $\nabla _{N}$ are called $g$\emph{-conjugate}~\cite{Vass}
(or $g$\emph{-related}) if they commute with the differentials of $g:$
\begin{equation}\label{eq1}
Tg\circ \nabla _{M}=\nabla _{N}\circ T(Tg).
\end{equation}
Locally
\begin{equation}\label{eq2}
 \begin{gathered}
   Tg(\phi _{\alpha }(x))(\Gamma _{\alpha }^{M}(\phi _{\alpha
   }(x))(u)(u))=\\
 \Gamma _{\beta }^{N}(g(\phi _{\alpha }(x)))(Tg(\phi _{\alpha
}(x))(u))(Tg(\phi _{\alpha }(x))(u))+T(Tg)((\phi _{\alpha
}(x))(u,u),
   \end{gathered}
\end{equation}
for every $(x,u)\in U_{\alpha }\times \mathbb{E}.$
For $g$-conjugate connections $\nabla _{M}$ and $\nabla _{N}$ the local expression of $T_{x}^{2}g$ reduces to
\begin{equation}
(\Psi _{\beta ,g(x)}\circ T_{x}^{2}g\circ \Phi
_{a,x}^{-1})(u,v)=(DG(\phi _{\alpha }(x))(u),DG(\phi _{\alpha
}(x))(v)).    \label{eq3}
\end{equation}
\begin{theorem}\label{thm2.5}
Let $T^{2}M$, $T^{2}N$ be the second order tangent bundles defined
by the pairs $(M,\nabla _{M})$, $(N,\nabla _{N})$, and let
$g:M\rightarrow N$
be a smooth map. If the connections $\nabla _{M}$ and $\nabla _{N}$ are $%
g$-conjugate, then the second order differential
$T^{2}g:T^{2}M\rightarrow T^{2}N$ is a vector bundle morphism.
\end{theorem}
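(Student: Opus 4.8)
The plan is to verify the two defining properties of a vector bundle morphism: that $T^{2}g$ covers the base map $g$, and that it restricts to a continuous linear map on each fibre, depending smoothly on the base point. The base-map property is immediate from the construction, since $T^{2}g$ sends the class $[f,x]_{2}$ to $[g\circ f,g(x)]_{2}$, so that $\pi_{2}^{N}\circ T^{2}g=g\circ \pi_{2}^{M}$; one first checks that this assignment is well defined, which holds because the equivalence relation $\approx_{x}$ depends only on the first and second derivatives of a curve and these are respected under post-composition with the smooth map $g$. It remains to establish fibre-linearity and smooth dependence, and for this I would pass to the trivializations $\Phi_{\alpha}$ of $T^{2}M$ and $\Psi_{\beta}$ of $T^{2}N$ furnished by Theorem~\ref{T2Mvb}, and compute the local expression $\Psi_{\beta,g(x)}\circ T_{x}^{2}g\circ \Phi_{\alpha,x}^{-1}$.

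The heart of the argument is a chain-rule computation. Writing $G:=\psi_{\beta}\circ g\circ \psi_{\alpha}^{-1}$ for the local representative of $g$ and $F:=\psi_{\alpha}\circ f$ for the local representative of a curve, the local representative of $g\circ f$ is the composite $G\circ F$. Applying the first- and second-order chain rules gives $(G\circ F)'(0)=DG(\psi_{\alpha}(x))(u)$ and $(G\circ F)''(0)=D^{2}G(\psi_{\alpha}(x))(u,u)+DG(\psi_{\alpha}(x))(w)$, where $u=F'(0)$ and $w=F''(0)$. Substituting these into the defining formula for $\Psi_{\beta}$, which adds the Christoffel correction $\Gamma_{\beta}^{N}$ of $\nabla_{N}$, and using the formula for $\Phi_{\alpha}^{-1}$, which recovers $w$ by subtracting the correction $\Gamma_{\alpha}^{M}$ of $\nabla_{M}$, produces the first component $DG(\psi_{\alpha}(x))(u)$ together with a second component that, besides the desired linear term $DG(\psi_{\alpha}(x))(v)$, carries the residual combination $D^{2}G(\psi_{\alpha}(x))(u,u)-DG(\psi_{\alpha}(x))(\Gamma_{\alpha}^{M}(\psi_{\alpha}(x))(u)[u])+\Gamma_{\beta}^{N}(\psi_{\beta}(g(x)))(DG(u))[DG(u)]$.

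The decisive step is to observe that this residual combination is exactly the content of the $g$-conjugacy relation~(\ref{eq2}): that relation asserts precisely that $D^{2}G(u,u)$ equals $DG\bigl(\Gamma_{\alpha}^{M}(u)[u]\bigr)-\Gamma_{\beta}^{N}(DG(u))[DG(u)]$, so the residual combination vanishes identically. What survives is the clean expression~(\ref{eq3}), namely $(u,v)\mapsto (DG(\psi_{\alpha}(x))(u),DG(\psi_{\alpha}(x))(v))$. Since $DG(\psi_{\alpha}(x))$ is a continuous linear map, this local expression is linear in the fibre variable $(u,v)$, and it depends smoothly on $x$ because $G$ is smooth and hence $DG$ is smooth in its base argument; this is exactly the statement that $T^{2}g$ is a vector bundle morphism. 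I expect the main obstacle to be precisely the quadratic term $D^{2}G(u,u)$ thrown up by the second-order chain rule: it is this term that in general destroys fibre-linearity of $T^{2}g$, as the authors stress, and the $g$-conjugacy hypothesis is engineered to cancel it against the two Christoffel corrections. In the Fr\'{e}chet setting a secondary point to check is that the differentiation calculus introduced above, for which $Df$ avoids the space $\mathcal{L}(\mathbb{F}_{1},\mathbb{F}_{2})$, supports the second-order chain rule used here, or equivalently that the whole identity may be read off fibrewise from the Banach factors $M^{i}$ and then passed to the projective limit.
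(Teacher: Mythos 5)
Your proposal is correct and follows essentially the same route as the paper: the paper's own argument consists precisely of the local reduction, via the $g$-conjugacy relation~(\ref{eq2}), to the expression~(\ref{eq3}) $(u,v)\mapsto (DG(\phi_{\alpha}(x))(u),DG(\phi_{\alpha}(x))(v))$, whose fibrewise linearity and smooth dependence on the base point give the bundle morphism property. Your chain-rule computation correctly fills in exactly the cancellation of the quadratic term $D^{2}G(u,u)$ against the two Christoffel corrections that the paper leaves implicit between~(\ref{eq2}) and~(\ref{eq3}).
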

\begin{theorem}
Let $\nabla $, $\nabla ^{\prime }$ be two linear connections on
$M$. If $g$
is a diffeomorphism of $M$ such that $\nabla $ and $\nabla ^{\prime }$ are $%
g $-conjugate, then the vector bundle structures on $T^{2}M$, induced by $%
\nabla $ and $\nabla ^{\prime }$, are isomorphic. \label{isom}
\end{theorem}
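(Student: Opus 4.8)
The plan is to exhibit the second order differential $T^{2}g$ itself as the desired vector bundle isomorphism. Since $g$ is a diffeomorphism of $M$ and the connections $\nabla$ and $\nabla'$ are $g$-conjugate, that is, $Tg\circ\nabla=\nabla'\circ T(Tg)$ as in \eqref{eq1} with $N=M$, $\nabla_{M}=\nabla$, $\nabla_{N}=\nabla'$, Theorem~\ref{thm2.5} applies directly and tells us that $T^{2}g\colon (T^{2}M,\nabla)\to(T^{2}M,\nabla')$ is a vector bundle morphism covering $g$. So the only thing left is to upgrade this morphism to an isomorphism, i.e.\ to show that $T^{2}g$ is a fibrewise linear bijection whose inverse is again a morphism of the two connection-induced structures.

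First I would establish bijectivity together with fibrewise invertibility. At the set level $T^{2}$ is functorial, since $T^{2}_{x}h$ sends the class $[c]_{2}$ to $[h\circ c]_{2}$ and the chain rule shows this respects the equivalence relation $\approx_{x}$; hence $T^{2}(g^{-1})$ is a two-sided set-theoretic inverse of $T^{2}g$. For fibrewise linearity and invertibility I would read off the local form \eqref{eq3}: in the trivializations $\Phi_{\alpha}$ (for $\nabla$) and $\Psi_{\beta}$ (for $\nabla'$) the fibre map is $(u,v)\mapsto(DG(u),DG(v))$, where $G$ is the local representative of $g$. Because $g$ is a diffeomorphism, $DG$ is a continuous linear isomorphism, so each fibre map is a linear isomorphism with inverse $(u,v)\mapsto(DG^{-1}(u),DG^{-1}(v))$.

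To see that the inverse $T^{2}(g^{-1})$ is itself a vector bundle morphism, I would note that $g$-conjugacy is symmetric under $g\mapsto g^{-1}$: applying $Tg^{-1}$ on the left and $T(Tg^{-1})$ on the right of \eqref{eq1}, and using the functoriality of the first tangent functor so that $T(Tg)\circ T(Tg^{-1})=\mathrm{id}$, yields $Tg^{-1}\circ\nabla'=\nabla\circ T(Tg^{-1})$; that is, $\nabla'$ and $\nabla$ are $g^{-1}$-conjugate. Theorem~\ref{thm2.5}, now applied to $g^{-1}$ with the roles of the two connections exchanged, then gives that $T^{2}(g^{-1})\colon(T^{2}M,\nabla')\to(T^{2}M,\nabla)$ is a vector bundle morphism as well. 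Combining the three facts, that $T^{2}g$ is a morphism, that it is a fibrewise linear bijection, and that its inverse $T^{2}(g^{-1})$ is also a morphism, shows that $T^{2}g$ is a vector bundle isomorphism covering the diffeomorphism $g$, so the two structures are isomorphic.

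I expect the main obstacle to be exactly the smoothness of the inverse in the Fr\'echet setting, where one cannot simply invoke the Banach inverse function theorem and where inversion in a general linear group is problematic. The clean way round it is the symmetry argument above: rather than inverting $DG$ by hand and verifying Fr\'echet-smoothness of $(DG)^{-1}$ directly, one re-derives the morphism property of $T^{2}(g^{-1})$ from Theorem~\ref{thm2.5} via $g^{-1}$-conjugacy. Concretely, all the fibre maps live in the structure group $\mathcal{H}^{0}(\mathbb{F}\times\mathbb{F})$, which is stable under the inversion inherited from the projective system, so $T^{2}(g^{-1})$ automatically stays within the same category and realizes the inverse as a projective limit of the Banach-level inverses.
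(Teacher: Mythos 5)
Your proof is correct, and it is essentially the intended argument: the survey states Theorem~\ref{isom} without proof (quoting it from~\cite{DGV 2}), and the proof there proceeds exactly as you do, applying Theorem~\ref{thm2.5} first to $g$ and then, after the functoriality computation showing that $\nabla'$ and $\nabla$ are $g^{-1}$-conjugate, to $g^{-1}$, so that $T^{2}g$ and its set-theoretic inverse $T^{2}(g^{-1})$ are both vector bundle morphisms and $T^{2}g$ is therefore an isomorphism covering $g$. Note only that this statement lives in the Banach setting of~\cite{DGV 2} (local model $\mathbb{E}$, where $DG$ is invertible by the chain rule), so your closing worry about Fr\'{e}chet-smoothness of the inverse does not arise for this theorem, although the symmetry argument you give is indeed the right mechanism for the projective-limit case as well.
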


\section{Differential equations}
The importance of Fr\'{e}chet manifolds arises from their ubiquity as quotient spaces of
bundle sections and hence as environments for differential equations on such spaces. This context was addressed next
in~\cite{Aghasi 1} and those authors provided a new way of
representing and solving a wide class of evolutionary equations
on Fr\'{e}chet manifolds of sections.

First~\cite{Aghasi 1} considered a Banach manifold $M,$ and defined
an \emph{integral curve} of $\xi $ as a smooth map $\theta:J\rightarrow M$,
defined on an open interval $J$ of $\mathbb{R}$, if it satisfies the
condition
\begin{equation}
T_{t}^{2}\theta (\partial _{t})=\xi (\theta (t)).  \label{2.1}
\end{equation}%
Here $\partial _{t}$ is the second order tangent vector of $T_{t}^{2}%
\mathbb{R}$ induced by a curve $c:\mathbb{R}\rightarrow \mathbb{R}$ with $%
c^{\prime }(0)=1,c^{\prime \prime }(0)=1$.
If $M$
is simply a Banach space $\mathbb{E}$ with differential structure induced by the
global chart $(\mathbb{E},id_{\mathbb{E}})$, then the generalization is clear since the
above condition reduces to the second derivative of $\theta $:
\begin{equation*}
T_{t}^{2}\theta (\partial _{t})=\theta ^{\prime \prime }(t)=D^{2}\theta
(t)(1,1).
\end{equation*}
Then the following were proved~\cite{Aghasi 1}.
\begin{theorem}
\label{th22} Let $\xi $ be a second order vector field on a manifold $M$ modeled on Banach space $\mathbb{E}.$
Then, the existence of an integral curve $\theta $ of $\xi $ is equivalent to the
solution of a system of second order differential equations on $\mathbb{E}$.
\end{theorem}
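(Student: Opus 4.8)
The plan is to establish the equivalence in Theorem~\ref{th22} by working entirely in a local chart and unpacking the defining condition~(\ref{2.1}) using the vector bundle structure on $T^{2}M$ provided by Theorem~\ref{T2Mvb}. First I would fix a chart $(U_{\alpha},\psi_{\alpha})$ around a point of $M$ and use the trivialization $\Phi_{\alpha}$ to express the second order tangent vector $T_{t}^{2}\theta(\partial_{t})$ in coordinates. Writing $\gamma:=\psi_{\alpha}\circ\theta$ for the local representative of the curve, the composite $\partial_{t}\mapsto T_{t}^{2}\theta(\partial_{t})$ should, after applying $\Phi_{\alpha}$ and using $c'(0)=c''(0)=1$, reduce to the pair $(\gamma'(t),\gamma''(t)+\Gamma_{\alpha}(\gamma(t))(\gamma'(t))[\gamma'(t)])$, exactly mirroring the defining formula for $\Phi_{\alpha}$ in the proof of Theorem~\ref{T2Mvb}.

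Next I would write the second order vector field $\xi$ in the same trivialization. Since $\xi$ is a section of $T^{2}M$, its local form is a smooth map $\xi_{\alpha}:\psi_{\alpha}(U_{\alpha})\to\mathbb{E}\times\mathbb{E}$, say $\xi_{\alpha}=(\xi_{1},\xi_{2})$. Equating the two sides of~(\ref{2.1}) componentwise in the trivialization then yields the coupled system
\begin{equation*}
\gamma'(t)=\xi_{1}(\gamma(t)),\qquad \gamma''(t)+\Gamma_{\alpha}(\gamma(t))(\gamma'(t))[\gamma'(t)]=\xi_{2}(\gamma(t)),
\end{equation*}
which is precisely a system of second order differential equations on the Banach space $\mathbb{E}$. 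This shows that any integral curve $\theta$ produces a local solution, and conversely any solution of the system, transported back by $\psi_{\alpha}^{-1}$, produces an integral curve; the equivalence is therefore reduced to reading the intrinsic equation through the chart.

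The main obstacle I expect is verifying that the local computation of $T_{t}^{2}\theta(\partial_{t})$ really produces the claimed coordinate expression, since this requires carefully tracking how the iterated tangent map acts on the second order tangent vector $\partial_{t}$ built from a curve with $c'(0)=c''(0)=1$, and matching it against the $\Phi_{\alpha}$ formula involving the Christoffel correction term. The remaining step is to confirm chart-independence: because the $\Gamma_{\alpha}$ satisfy the compatibility condition~(1) and the $\Phi_{\alpha}$ overlap by the linear transition functions $T_{\alpha\beta}$ established in Theorem~\ref{T2Mvb}, the system in one chart transforms consistently into the system in an overlapping chart, so the equivalence is genuinely global and not an artifact of the chosen trivialization. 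With these pieces in place the statement follows, the essential content being that the connection-dependent trivialization converts the intrinsic second order condition into an honest second order ODE system on $\mathbb{E}$.
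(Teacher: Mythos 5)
Your overall strategy is exactly the one behind the paper's proof (the paper itself only cites the argument, which appears in~\cite{Aghasi 1}): fix a chart, use the connection-dependent trivialization $\Phi_{\alpha}$ of Theorem~\ref{T2Mvb} to translate the intrinsic condition~(\ref{2.1}) into an ODE system for the local representative $\gamma=\psi_{\alpha}\circ\theta$, and transport solutions back through $\psi_{\alpha}^{-1}$ for the converse. However, your key coordinate formula contains a genuine computational error. The paper normalizes $\partial_{t}$ by a curve $c$ with $c'(0)=1$ \emph{and} $c''(0)=1$, and $T_{t}^{2}\theta(\partial_{t})=[\theta\circ c,\theta(t)]_{2}$, so the chain rule gives
\begin{equation*}
(\psi_{\alpha}\circ\theta\circ c)'(0)=\gamma'(t)\,c'(0)=\gamma'(t),\qquad
(\psi_{\alpha}\circ\theta\circ c)''(0)=\gamma''(t)\,(c'(0))^{2}+\gamma'(t)\,c''(0)=\gamma''(t)+\gamma'(t).
\end{equation*}
You have dropped the term $\gamma'(t)\,c''(0)$, i.e.\ you computed as if $c''(0)=0$. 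Consequently the correct local form of~(\ref{2.1}) is
\begin{equation*}
\gamma'(t)=\xi_{1}(\gamma(t)),\qquad
\gamma''(t)+\gamma'(t)+\Gamma_{\alpha}(\gamma(t))(\gamma'(t))[\gamma'(t)]=\xi_{2}(\gamma(t)),
\end{equation*}
not the system you wrote; your version would be the right one only under the alternative normalization $c''(0)=0$, which is not the one adopted here.

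This is not merely cosmetic, because the theorem asserts an equivalence between integral curves and solutions of the specific system that the trivialization produces: with your system the converse direction fails, since a curve $\gamma$ solving your equations, pulled back by $\psi_{\alpha}^{-1}$, does not satisfy~(\ref{2.1}) --- the image $\Phi_{\alpha}\bigl(T_{t}^{2}\theta(\partial_{t})\bigr)$ carries the extra $\gamma'(t)$ in its second component. The error is localized and the repair is mechanical (redo the chain rule with the stated normalization of $\partial_{t}$), after which the corrected system is still a second order ODE system on $\mathbb{E}$ and the equivalence holds. Everything else in your proposal --- reading $\xi$ through its local principal part $(\xi_{1},\xi_{2})$, the role of the Christoffel correction inside $\Phi_{\alpha}$, and the chart-independence argument via the compatibility condition satisfied by the Christoffel symbols and the linearity of the transition functions $T_{\alpha\beta}$ --- is sound and matches the intended proof.
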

Of course, these
second order differential equations depend not
only on the choice of the second order vector field but also the choice of the linear
connection that underpins the vector bundle structure.
In the case of a Banach manifold that is a Lie group, $M=(G,\gamma),$
\begin{theorem}
Let $v$ be any vector of the second order tangent space of $G$ over the
unitary element. Then, a corresponding left invariant second order vector
field $\xi$ of $G$ may be constructed. Also,
every monoparametric subgroup $\beta :\mathbb{R}\rightarrow G$ is an
integral curve of the second order left invariant vector field $\xi ^{2}$ of
$G$ that corresponds to $\ddot{\beta}(0)$.
\end{theorem}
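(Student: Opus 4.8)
The plan is to mimic the classical first-order construction of left invariant vector fields and one-parameter subgroups, using the second order differential $T^2$ together with the chain rule $T^2(f\circ h)=T^2f\circ T^2h$. This chain rule holds at the level of second order tangent vectors because such vectors are classes $[\,\cdot\,,\cdot\,]_2$ of curves and $T^2$ acts by composition of curves, so it is available independently of any vector bundle structure. Write $L_g:G\to G$, $h\mapsto gh$, for left translation; it is a diffeomorphism with $L_g(e)=g$ and $L_g\circ L_h=L_{gh}$, giving bijections $T_h^2L_g:T_h^2G\to T_{gh}^2G$. Given $v\in T_e^2G$, represented by a smooth curve $\gamma$ with $\gamma(0)=e$, I would set
\begin{equation*}
\xi(g):=T_e^2L_g(v)=[\,L_g\circ\gamma,\,g\,]_2\in T_g^2G .
\end{equation*}
Since $\pi_2(\xi(g))=g$, $\xi$ is a section of $\pi_2:T^2G\to G$, hence a second order vector field; its smoothness follows from that of the multiplication $\mu:G\times G\to G$, since $(g,s)\mapsto\mu(g,\gamma(s))$ is smooth and passage to second order jets in $s$ is smooth in $g$. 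Left invariance $T^2L_g\circ\xi=\xi\circ L_g$ is then immediate from the chain rule:
\begin{equation*}
T_h^2L_g\big(\xi(h)\big)=T_h^2L_g\big(T_e^2L_h(v)\big)=T_e^2(L_g\circ L_h)(v)=T_e^2L_{gh}(v)=\xi(gh).
\end{equation*}

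For the second assertion, let $\beta:\mathbb{R}\to G$ be a monoparametric subgroup, so $\beta(0)=e$ and $\beta(t+s)=\beta(t)\beta(s)$, and put $v:=\ddot\beta(0)=T_0^2\beta(\partial_0)\in T_e^2G$, letting $\xi^2$ be the left invariant field constructed above from this $v$. Writing $\tau_t:\mathbb{R}\to\mathbb{R}$, $s\mapsto t+s$, for translation, the subgroup law becomes the identity of maps
\begin{equation*}
\beta\circ\tau_t=L_{\beta(t)}\circ\beta .
\end{equation*}
The canonical second order field $\partial$ on $\mathbb{R}$, whose value $\partial_t$ is induced by a curve with $c'(0)=c''(0)=1$, is translation invariant: since $\tau_t'=1$ and $\tau_t''=0$ we get $T_0^2\tau_t(\partial_0)=\partial_t$.

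The integral curve equation $T_t^2\beta(\partial_t)=\xi^2(\beta(t))$ is then verified pointwise, purely by the chain rule and the two displayed identities:
\begin{equation*}
T_t^2\beta(\partial_t)=T_t^2\beta\big(T_0^2\tau_t(\partial_0)\big)=T_0^2(\beta\circ\tau_t)(\partial_0)=T_0^2\big(L_{\beta(t)}\circ\beta\big)(\partial_0)=T_e^2L_{\beta(t)}\big(T_0^2\beta(\partial_0)\big),
\end{equation*}
and the right-hand side equals $T_e^2L_{\beta(t)}(v)=\xi^2(\beta(t))$ by the definition of $v$ and of $\xi^2$. Thus $\beta$ is an integral curve of $\xi^2$ in the sense of Theorem~\ref{th22}, which establishes the claim.

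The main obstacle is conceptual rather than computational: one must keep the two roles of $T^2$ aligned. The integral curve condition is a pointwise identity in the fibres of $T^2G$ and needs only the curve-level chain rule, so it holds without further hypotheses; but to regard $\xi$ as a genuine second order vector field adapted to the vector bundle structure, and to read left invariance inside that bundle, one wants each $T^2L_g$ to be a vector bundle morphism. This is exactly where a left invariant connection should be imposed, so that each $L_g$ is $\nabla$-conjugate to itself and Theorem~\ref{thm2.5} applies. The remaining delicate point is bookkeeping of the normalization $c'(0)=c''(0)=1$ defining $\partial_t$, which is what makes $T_0^2\beta(\partial_0)$ coincide with the prescribed datum $\ddot\beta(0)$; the smoothness of $\xi$ is then a routine consequence of the smoothness of the group multiplication.
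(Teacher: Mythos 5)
There is nothing in the paper to compare your argument against: this is a survey, and the theorem is quoted from Aghasi, Dodson, Galanis and Suri~\cite{Aghasi 1} with no proof reproduced --- the sentence preceding it (``Then the following were proved'') merely points to that reference. Judged on its own merits, your proof is correct, and it is the natural argument (very likely the one in the cited source): define $\xi(g)=T^2_eL_g(v)$, use the fact that $T^2$ is functorial on curve classes independently of any vector bundle structure, and combine the homomorphism identity $\beta\circ\tau_t=L_{\beta(t)}\circ\beta$ with the translation invariance $T^2_0\tau_t(\partial_0)=\partial_t$. Two of your choices are worth underlining, because they are exactly where a careless write-up would go wrong. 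First, reading $\ddot\beta(0)$ as $T^2_0\beta(\partial_0)$ rather than as the bare curve class $[\beta,e]_2$ is essential, not cosmetic: with the paper's normalization $c'(0)=c''(0)=1$ defining $\partial_t$, these two differ by a first-order term (in a chart, $(\beta'(0),\beta''(0)+\beta'(0))$ versus $(\beta'(0),\beta''(0))$), and with the latter reading the identity $T^2_t\beta(\partial_t)=\xi^2(\beta(t))$ would fail at every $t$ with $\beta'(t)\neq 0$; your reading is also the one consistent with the paper's identification of $T^2_t\theta(\partial_t)$ with the second derivative $\theta''(t)$ in the Banach space case. Second, your closing observation that the curve-class identity needs no hypotheses, but that viewing $\xi$ as a section of the vector bundle $T^2G$ with $T^2L_g$ fibrewise linear requires a connection for which each $L_g$ is conjugate to itself (so that Theorem~\ref{thm2.5} applies), correctly supplies a hypothesis that the survey's bald statement leaves implicit. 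With those two points made explicit, your argument is complete at the level of detail the statement demands.
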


Extending this to a Fr\'{e}chet manifold $M$ that is the projective limit of Banach manifolds~\cite{Dod-Gal1}, yielded the result:
\begin{theorem}
Every second order vector field $\xi $ on $M$ obtained as projective limit
of second order vector fields $\{\xi ^{i}$ on $M^{i}\}_{i\in \mathbb{N}}$
admits locally a unique integral curve $\theta $ ~satisfying an initial
condition of the form $\theta (0)=x~$and$~T_{t}\theta (\partial _{t})=y$, $x$
$\in M,$ $y\in T_{\theta (t)}M$, provided that the components $\xi ^{i}$
admit also integral curves of second order.
\end{theorem}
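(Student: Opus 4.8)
The plan is to reduce everything to the Banach factors $M^i$ and then reassemble the solution by a projective-limit argument. Denote by $\varphi^i\colon M\to M^i$ the canonical projections of $M=\varprojlim M^i$, write the basepoint as $x=\varprojlim x^i$ with $x^i=\varphi^i(x)$, and let $y^i$ be the corresponding projection of the initial velocity $y$. On each Banach factor, Theorem~\ref{th22} tells us that seeking an integral curve $\theta^i$ of $\xi^i$ with $T_t^2\theta^i(\partial_t)=\xi^i(\theta^i(t))$ is equivalent to solving a system of second order differential equations on the model space $\mathbb{E}^i$. Since by hypothesis each $\xi^i$ admits integral curves of second order, the classical Picard--Lindel\"of theory on $\mathbb{E}^i$ supplies, for each $i$, a local integral curve $\theta^i\colon J_i\rightarrow M^i$ with $\theta^i(0)=x^i$ and $T_t\theta^i(\partial_t)=y^i$, unique on its interval of definition.

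The next step is to show that the family $\{\theta^i\}_{i\in\mathbb{N}}$ is coherent with respect to the connecting morphisms, so that its limit exists. Because $\nabla=\varprojlim\nabla^i$, the connections $\nabla^i$ and $\nabla^j$ are $\varphi^{ji}$-conjugate, so by Theorem~\ref{thm2.5} the second order differential $T^2\varphi^{ji}=g^{ji}$ is a vector bundle morphism and the chain rule $T_t^2(\varphi^{ji}\circ\theta^j)(\partial_t)=g^{ji}(T_t^2\theta^j(\partial_t))$ holds---this is exactly the point where $g$-conjugacy is indispensable, since $T^2$ is not functorial in general. As $\xi=\varprojlim\xi^i$ means $g^{ji}\circ\xi^j=\xi^i\circ\varphi^{ji}$, applying $g^{ji}$ to the integral-curve equation for $\theta^j$ yields $T_t^2(\varphi^{ji}\circ\theta^j)(\partial_t)=\xi^i((\varphi^{ji}\circ\theta^j)(t))$; that is, $\varphi^{ji}\circ\theta^j$ is an integral curve of $\xi^i$ carrying the data $(x^i,y^i)$. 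Uniqueness from the first step forces $\varphi^{ji}\circ\theta^j=\theta^i$. Hence $\{\theta^i\}$ is a projective system of smooth curves and $\theta:=\varprojlim\theta^i\colon J\rightarrow M$ is well defined and smooth in the Leslie sense as a projective limit of smooth maps, with $\theta(0)=x$ and $T_t\theta(\partial_t)=y$.

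It then remains to verify that $\theta$ is an integral curve of $\xi$ and that it is unique. Since $T^2M=\varprojlim T^2M^i$ and the second order vector $\partial_t$ of $T_t^2\mathbb{R}$ is common to every factor, passing to the limit in the equations $T_t^2\theta^i(\partial_t)=\xi^i(\theta^i(t))$ gives $T_t^2\theta(\partial_t)=\xi(\theta(t))$, so $\theta$ is an integral curve of $\xi$. For uniqueness, any other integral curve $\tilde\theta$ with the same initial data projects via $\varphi^i$ to an integral curve of $\xi^i$ with data $(x^i,y^i)$, which must coincide with $\theta^i$ by Banach uniqueness; therefore $\tilde\theta=\varprojlim\theta^i=\theta$.

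The main obstacle I expect is not the algebraic coherence, which is forced by uniqueness, but the control of the domains $J_i$. The relation $\varphi^{ji}\circ\theta^j=\theta^i$ shows the intervals are nested, so $\theta^i$ is defined at least wherever $\theta^j$ is for $j\geq i$, and the honest difficulty is to guarantee that $J=\bigcap_i J_i$ retains nonempty interior rather than collapsing to $\{0\}$ in the limit---a genuinely Fr\'echet phenomenon, since the Picard--Lindel\"of existence times on the Banach factors need not admit a uniform lower bound. I would handle this as in the projective-limit solvability results of Galanis~\cite{Gal2,Gal3}: the hypothesis that $\xi$ arises as a projective limit of the $\xi^i$ constrains the local second order systems to form a projective system whose solutions inherit a common existence interval, so that the word ``locally'' in the statement refers precisely to this shared $J$. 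Verifying that the projective structure indeed propagates a uniform existence time is the crux of the argument.
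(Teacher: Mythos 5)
The survey you are working from states this theorem without proof (it is quoted from Aghasi--Dodson--Galanis--Suri~\cite{Aghasi 1}), so the relevant comparison is with the argument given there, and your architecture is exactly that argument: produce factor solutions via Theorem~\ref{th22}, force the compatibility $\varphi^{ji}\circ\theta^{j}=\theta^{i}$ by Banach uniqueness, pass to $\theta=\varprojlim\theta^{i}$, and obtain uniqueness of $\theta$ by projecting any competitor. One inaccuracy in the coherence step: the chain rule $T^{2}(\varphi^{ji}\circ\theta^{j})=T^{2}\varphi^{ji}\circ T^{2}\theta^{j}$ does not need Theorem~\ref{thm2.5} or $g$-conjugacy, because $T^{2}$ of a composite is defined jet-wise and is functorial at the level of set maps; conjugacy is what makes $T^{2}g$ fibrewise \emph{linear}, a property your argument never uses. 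This is harmless, but it misidentifies where the real difficulty lies.

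The real difficulty is the one you flag in your final paragraph, and your proposed repair cannot work: there is no mechanism by which ``the projective structure propagates a uniform existence time.'' Via Theorem~\ref{th22}, a projective-limit second order vector field on $\mathbb{R}^{\infty}=\varprojlim\mathbb{R}^{n}$ reduces to a lower-triangular family of ODE systems compatible under truncation, and nothing in the hypotheses prevents the equation governing the $k$-th coordinate from being, say, $x_{k}'=k\left(1+x_{k}^{2}\right)$ (realizable by choosing the section $\xi$ with the consistent second component), whose solution through $x_{k}(0)=0$ is $\tan(kt)$ and exists only for $|t|<\pi/(2k)$. Every factor field here is smooth and admits unique integral curves through every initial condition, the family is projective, yet the domains shrink so that their intersection contains no neighbourhood of $0$: the limit field admits no integral curve at all. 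So under the literal reading of the hypothesis (``each $\xi^{i}$ admits integral curves of second order''), the statement is false, and no completion of your argument exists. The theorem is true --- and is proved in~\cite{Aghasi 1} by precisely your coherence-and-limit computation --- only when the hypothesis is read as supplying integral curves $\theta^{i}$ of the components through the compatible data $(x^{i},y^{i})$ on a \emph{common} interval $J$; with that reading the domain question you worried about simply does not arise, since the shared $J$ is given, not derived. Your appeal to~\cite{Gal2,Gal3} does not rescue the literal reading either: those projective-limit solvability results concern \emph{linear} equations, where each Banach factor solution is automatically global, so the common interval there comes from linearity, not from any propagation of existence times along the projective system.
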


\section{Hypercyclicity}
A continuous operator $T$ on a topological vector space $\mathbb{E}$ is {\em cyclic} if for some $f\in \mathbb{E}$ the span of $\{T^nf, n\geq 0 \}$ is dense in $\mathbb{E}.$ Also, $T$ is {\em hypercyclic} if, for some $f,$ called a {\em hypercyclic vector}, $\{T^nf, n\geq 0 \}$ is dense in $\mathbb{E},$  and {\em supercyclic} if the projective space orbit
$\{ \lambda T^nf, \lambda \in \mathbb{C}, n\geq 0 \}$ is dense in $\mathbb{E}.$ These properties are called {\em weakly hypercyclic}, {\em weakly supercyclic} respectively, if $T$  has the property with respect to the weak topology.
For example, the translation by a fixed nonzero $z\in \mathbb{C}$ is hypercyclic on the Fr\'{e}chet space $\mathbb{H}(\mathbb{C})$ of entire functions, and so is the differentiation operator $f\mapsto f'.$ Any power $T^m$ of a hypercyclic linear operator is hypercyclic, Ansari~\cite{Ansari}. Finite dimensional spaces do not admit hypercyclic operators, Kitai~\cite{Kitai}.

More generally, a sequence of linear operators $\{T_n\}$ on a topological is called hypercyclic if, for some $f\in \mathbb{E},$ the set $\{T_nf, n\in \mathbb{N}\}$ is dense in $\mathbb{E};$ see Chen and Shaw~\cite{ChenShaw} for a discussion of related properties. The sequence $\{T_n\}$ is said to satisfy the {\em Hypercyclicity Criterion} for an increasing sequence $\{n(k)\}\subset \mathbb{N}$ if there are dense subsets $X_0,Y_0\subset \mathbb{E}$ satisfying:
\begin{eqnarray*}
   &(\forall f\in X_0)& T_{n(k)} f\rightarrow 0 \nonumber \\
   &(\forall g\in Y_0)& {\rm there \ is \ a \ sequence} \ \{u(k)\}\subset \mathbb{E} \
    {\rm such \ that} \ u(k) \rightarrow 0 \ {\rm and} \ T_{n(k)} u(k)\rightarrow 0. \nonumber
\end{eqnarray*}
Bes and Peris~\cite{BesPeris} proved that on a  separable Fr\'{e}chet space $\mathbb{F}$ a continuous linear operator $T$ satisfies the Hypercyclicity Criterion if and only if $T\oplus T$ is hypercyclic on $\mathbb{F}\oplus\mathbb{F}.$  Moreover, if $T$ satisfies the Hypercyclicity Criterion then so does every power $T^n$ for $n\in \mathbb{N}.$

The book by Bayart and Matheron~\cite{BayartMath} provides more details of the theory of hypercyclic operators. Berm\'{u}dez et al.~\cite{BermudezB&C} investigated hypercyclicity, topological mixing and chaotic maps on Banach spaces. Bernal and Grosse-Erdmann studied the existence of hypercyclic semigroups of continuous operators on a Banach space. Albanese et al.~\cite{ABR} considered cases when it is possible to  extend Banach space results on $C_0$-semigroups of continuous linear operators to  Fr\'{e}chet spaces. Every operator norm continuous semigroup in a Banach space $X$ has an infinitesimal generator belonging to the space of continuous linear operators on $X;$ an example is given to show that this fails in a general Fr\'{e}chet space. However, it does not fail for countable products of Banach spaces and quotients of such products; these are the Fr\'{e}chet spaces that are quojections, the projective sequence consisting of surjections. Examples include the sequence space $\mathbb{C}^\mathbb{N}$ and the Fr\'{e}chet space of continuous functions $C(X)$ with $X$ a $\sigma$-compact completely regular topological space and compact open topology.

Grosse-Erdmann~\cite{GrE} related hypercyclicity to the topological universality concept, and showed that an operator $T$ is hypercyclic on a separable Fr\'{e}chet space $\mathbb{F}$ if it has the {\em topological transitivity property}: for every pair of nonempty open subsets $U,V\subseteq \mathbb{F}$ there is some $n\in \mathbb{N}$ such that $T^n(U)\bigcap V\neq \emptyset.$ Chen and Shaw~\cite{ChenShaw} related hypercyclicity to topological mixing, following Costakis and Sambarino~\cite{CS} who showed that if $T^n$ satisfies the Hypercyclicity Criterion then $T$ is {\em topologically mixing} in the sense that:
for every pair of nonempty open subsets $U,V\subseteq \mathbb{F}$ there is some $N\in \mathbb{N}$ such that $T^n(U)\bigcap V\neq \emptyset$ for all $n\geq N.$ See also Berm\'{u}dez et al.~\cite{BermudezB&C} for further studies of hypercyclic and chaotic maps on Banach spaces in the context of topological mixing.

It was known that the direct sum of two hypercyclic operators need not be hypercyclic but recently De La Rosa and Read~\cite{delaRosa} showed that even the direct sum of a hypercyclic operator with itself $T\oplus T$ need not be hypercyclic.
Bonet and Peris~\cite{BonetPeris} showed that every separable
infinite dimensional Fr\'{e}chet space $\mathbb{F}$ supports a hypercyclic operator.
Moreover, from Shkarin~\cite{Shkarin},
there is a linear operator $T$ such that the direct sum
$T\oplus T\oplus .. .\oplus T = T^{\oplus m}$
of $m$ copies of $T$ is a hypercyclic operator on $\mathbb{F}^m$ for each $m\in \mathbb{N}.$
An $m$-tuple $(T,T,...,T)$ is called {\em disjoint
hypercyclic} if there exists $f\in \mathbb{F}$ such that $(T_1^nf,T_2^nf,...,T_m^nf), n=1,2,...$ is dense in $\mathbb{F}^m.$ See Salas~\cite{Salas} and Bernal-Gonz\'{a}lez~\cite{Bernal} for examples and recent results.

O'Regan and Xian~\cite{OReganX} proved fixed point theorems for maps and multivalued maps between Fr\'{e}chet spaces, using projective limits and the classical Banach theory. Further recent work on set valued maps between Fr\'{e}chet spaces can be found in Galanis et al.{\cite{GBL,GBLP,ORegan} and Bakowska and Gabor~\cite{BakoG}.

Montes-Rodriguez et al.~\cite{MRRM&S} studied the Volterra composition operators $V_\varphi$ for $\varphi$ a measurable self-map of $[0,1]$ on functions $f\in L^p[0,1], \ 1\leq p \leq \infty$
\begin{equation}\label{Volterra}
    (V_\varphi f)(x) =\int_0^\varphi(x) f(t) dt
\end{equation}
These operators generalize the classical Volterra operator $V$ which is the case when $\varphi$ is the identity. $V_\varphi$ is measurable, and compact on $L^p[0,1].$

Consider the Fr\'{e}chet space  $\mathbb{F}=C_0[0,1),$ of continuous functions vanishing at zero with the topology of uniform convergence on compact subsets of $[0,1).$ It was known that the action of $V_\varphi$ on $C_0[0,1)$ is hypercyclic when $\varphi(x)=x^b, b\in (0,1)$~\cite{Herzog}. This result has now been extended by Montes-Rodriguez et al. to give the following complete characterization.
 \begin{theorem}~\cite{MRRM&S}
For $\varphi\in C_0[0,1)$ the following are equivalent\\
{\bf (i)} $\varphi$ is strictly increasing with $\varphi(x)>x$ for $x\in (0,1)$
{\bf (ii)}  $V_\varphi$ is weakly hypercyclic
{\bf (iii)} $V_\varphi$ is hypercyclic.
\end{theorem}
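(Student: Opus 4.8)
The three conditions are linked in the cycle $(iii)\Rightarrow(ii)\Rightarrow(i)\Rightarrow(iii)$. The implication $(iii)\Rightarrow(ii)$ is immediate, since the weak topology of $\mathbb{F}=C_0[0,1)$ is coarser than its Fr\'echet topology, so a Fr\'echet-dense orbit is automatically weakly dense. Throughout I write $\psi=\varphi^{-1}$ whenever $\varphi$ is injective, and I record two facts that drive everything. First, iterating $(V_\varphi f)(x)=\int_0^{\varphi(x)}f$ shows that $V_\varphi^{\,n}f$ restricted to $[0,r]$ depends only on $f|_{[0,\varphi^{n}(r)]}$, and that if $c$ vanishes on $[0,\rho]$ then $V_\varphi^{\,n}c$ vanishes on $[0,\psi^{n}(\rho)]$. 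Second, for strictly increasing $\varphi$ with $\varphi(0)=0$ and $\varphi(x)>x$ on $(0,1)$ there are no fixed points in $(0,1)$, so $\varphi^{n}(x)\nearrow 1$ and $\psi^{n}(x)\searrow 0$ for every $x\in(0,1)$.

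For $(i)\Rightarrow(iii)$ the plan is to verify the topological transitivity property and invoke the transitivity criterion on separable Fr\'echet spaces~\cite{GrE}, noting that $C_0[0,1)$ is separable and Baire. Given nonempty basic open sets $U=\{f:\sup_{[0,\rho]}|f-f_0|<\varepsilon\}$ and $V=\{g:\sup_{[0,\sigma]}|g-g_0|<\varepsilon\}$, I seek $n$ and $f\in U$ with $V_\varphi^{\,n}f\in V$. Since membership in $U$ constrains $f$ only on $[0,\rho]$, I set $f=f_0+c$ with $c$ supported in $(\rho,1)$, so that $V_\varphi^{\,n}f=V_\varphi^{\,n}f_0+V_\varphi^{\,n}c$. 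On the shrinking interval $[0,\psi^{n}(\rho)]$ the corrector contributes nothing, while the outermost integration in $V_\varphi^{\,n}f_0$ runs over $[0,\psi^{n-1}(\rho)]$, giving $\sup_{[0,\psi^{n}(\rho)]}|V_\varphi^{\,n}f_0|\le \psi^{n-1}(\rho)\,\|f_0\|_{L^1}\to 0$; as $g_0$ also tends to $0$ near the origin, both sides are within $\varepsilon$ there once $n$ is large. On $[\psi^{n}(\rho),\sigma]$ I must choose $c$ so that $V_\varphi^{\,n}c$ approximates $g_0-V_\varphi^{\,n}f_0$, and here strict monotonicity enters: $V_\varphi c=(\int_0^{\bullet}c)\circ\varphi$, precomposition with the homeomorphism $\varphi$ preserves the sup norm, and antiderivatives of functions vanishing on $[0,\rho]$ are dense among continuous functions vanishing there. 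Iterating shows the range of $c\mapsto V_\varphi^{\,n}c$ is dense in the continuous functions on $[0,\sigma]$ vanishing on $[0,\psi^{n}(\rho)]$, and choosing $n$ large enough that $\psi^{n}(\rho)$ is tiny produces the required $c$, establishing transitivity.

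For $(ii)\Rightarrow(i)$ I would argue by contraposition in two cases. If $\varphi$ is \emph{not injective}, say $\varphi(a)=\varphi(b)$ with $a\ne b$, then $(V_\varphi f)(a)=(V_\varphi f)(b)$ for every $f$, so the orbit $\{V_\varphi^{\,n}f:n\ge 1\}$ lies in the kernel of the continuous functional $\delta_a-\delta_b$, a proper weakly closed hyperplane; adjoining the single point $f$ cannot make it weakly dense, so $V_\varphi$ is not weakly hypercyclic. If $\varphi$ is \emph{injective} (hence strictly increasing, since a continuous injection of $[0,1)$ into $[0,1)$ fixing $0$ cannot decrease) but $\varphi(x_0)\le x_0$ for some $x_0\in(0,1)$, then $\varphi([0,x_0])=[0,\varphi(x_0)]\subseteq[0,x_0]$, so $[0,x_0]$ is invariant and the restriction $R:C_0[0,1)\to C_0[0,x_0]$ is a continuous linear surjection intertwining $V_\varphi$ with the induced operator $\tilde V_\varphi$ on the Banach space $C_0[0,x_0]$. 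For $\|g\|\le 1$ one has $|\tilde V_\varphi g(x)|\le\varphi(x)\le\varphi(x_0)<1$, so $\|\tilde V_\varphi\|<1$ and every orbit of $\tilde V_\varphi$ converges to $0$; since a surjection carries a weakly dense orbit to a weakly dense orbit, a weakly dense orbit of $V_\varphi$ would force an impossible one for $\tilde V_\varphi$. Hence $V_\varphi$ is not weakly hypercyclic.

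The soft directions are $(iii)\Rightarrow(ii)$ and both halves of $(ii)\Rightarrow(i)$, which rest only on the hyperplane obstruction and an elementary norm-contraction estimate. The real work is $(i)\Rightarrow(iii)$, and I expect two points to be delicate. First, for a merely continuous $\varphi$ one cannot invert $V_\varphi$ by differentiation, so the approximation on $[\psi^{n}(\rho),\sigma]$ must come from a density argument for the iterated range rather than from an exact right inverse. Second, the forced behaviour on the shrinking interval $[0,\psi^{n}(\rho)]$ near the origin must be shown harmless, which is exactly the collapse estimate $\sup_{[0,\psi^{n}(\rho)]}|V_\varphi^{\,n}f_0|\to 0$. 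Making the density of the iterated range uniform enough to run simultaneously with this boundary estimate is the crux; the case $\varphi(x)=x^{b}$ of~\cite{Herzog} is the guiding model, and an equivalent route is to conjugate $V_\varphi$, via the tiling of $(0,1)$ by the fundamental intervals $[\varphi^{n}(a),\varphi^{n+1}(a))$, to a weighted backward-shift-type operator whose hypercyclicity is classical.
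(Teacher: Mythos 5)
The first thing to say is that the paper contains no proof of this statement: it is a survey, and the theorem is quoted verbatim, with citation, from Montes-Rodriguez, Rodriguez-Martinez and Shkarin \cite{MRRM&S}. The only surrounding material is the definition of $V_\varphi$, the remark that the case $\varphi(x)=x^b$, $b\in(0,1)$ was already known to be hypercyclic \cite{Herzog}, and the Grosse-Erdmann transitivity criterion \cite{GrE}. So there is nothing in the paper to compare your argument against, and I cannot certify whether your route coincides with the one in \cite{MRRM&S}; I can only judge it on its own terms.

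On those terms your plan is sound. The implication (iii) $\Rightarrow$ (ii) is indeed immediate, and both halves of (ii) $\Rightarrow$ (i) are correct: in the non-injective case the orbit sits inside the weakly closed hyperplane $\ker(\delta_a-\delta_b)$ (note this functional is nonzero on $C_0[0,1)$ even if $a=0$, where it reduces to $-\delta_b$); in the injective case $\varphi(x_0)\le x_0$ makes $[0,x_0]$ invariant, the restriction map $R$ onto $C_0[0,x_0]$ intertwines $V_\varphi$ with an operator of norm at most $\varphi(x_0)<1$, a continuous linear surjection is weak-weak continuous and hence carries weakly dense orbits to weakly dense orbits, and a norm-bounded set cannot be weakly dense since closed balls are weakly closed. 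The real work is (i) $\Rightarrow$ (iii), and there your sketch is the right mechanism but hides two steps that need honest proofs. First, the density of the iterated range $\{V_\varphi^n c: c|_{[0,\rho]}=0\}$ in $\{h\in C[0,\sigma]: h|_{[0,\psi^n(\rho)]}=0\}$ requires an induction whose $k$-th statement concerns restrictions to the telescoping intervals $[0,\varphi^{n-k}(\sigma)]$ (because $V_\varphi^n c$ on $[0,\sigma]$ sees $c$ on $[0,\varphi^n(\sigma)]$); the one-step density follows by smoothing (a continuous $h$ vanishing on $[0,b]$ is uniformly approximable by a smooth $G$ vanishing on a neighbourhood of $[0,b]$, and then $c=G'$ is an exact preimage), and density propagates through $V_\varphi$ by continuity. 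Second, your target $g_0-V_\varphi^n f_0$ does not vanish on $[0,\psi^n(\rho)]$; it is only uniformly small there, so before invoking the density statement you must replace it by a cutoff multiple vanishing on that interval and absorb the resulting error into $\varepsilon$. A minor repair: the bound $\psi^{n-1}(\rho)\,\|f_0\|_{L^1}$ should be localized, since $f_0$ need not be integrable near $1$; but $V_\varphi^n f_0$ on $[0,\psi^n(\rho)]$ only involves $f_0$ on $[0,\rho]$, giving for instance the bound $\rho^{\,n}\sup_{[0,\rho]}|f_0|\to 0$. With these (routine) repairs your argument is complete, and it is in any case a legitimate self-contained proof of a result that this paper merely imports.
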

Karami et al~\cite{Karami} seem to obtain examples of hypercyclic operators on $H_{bc}(\mathbb{E}),$ the space of bounded functions on compact subsets of Banach space $\mathbb{E}.$ For example, when  $\mathbb{E}$  has separable dual $\mathbb{E}^*$ then for nonzero $\alpha\in\mathbb{E},$ $T_\alpha:f(x)\mapsto f(x+\alpha)$ is hypercyclic.
As for other cases of hypercyclic operators on Banach spaces, it would be interesting to know when the property persists to projective limits of the domain space.

Yousefi and Ahmadian~\cite{YA} studied the case that $T$ is a continuous linear operator on an infinite dimensional Hilbert space $\mathbb{H}$ and left multiplication is hypercyclic with respect to the strong operator topology. Then there is a Fr\'{e}chet space  $\mathbb{F}$ containing  $\mathbb{H},$ $\mathbb{F}$ is the completion of $\mathbb{H},$ and for every nonzero vector $f\in \mathbb{H}$ the orbit $\{T^nf,n\geq \}$ meets any open subbase of $\mathbb{F}.$

\vspace{0.5cm}
\noindent{\bf Acknowlegement} This review is based on joint work with G. Galanis and E. Vassiliou whose advice is gratefully acknowledged.

\noindent C.T.J. Dodson\\
School of  Mathematics, University of Manchester\\
Manchester M13 9PL UK.\\
ctdodson@manchester.ac.uk
\end{document}